\renewcommand{\PrintDOI}[1]{\doi{#1}}
\newtheorem{theorem}{Theorem}[section]
\newtheorem{lemma}[theorem]{Lemma}
\newtheorem{proposition}[theorem]{Proposition}
\newtheorem{corollary}[theorem]{Corollary}
\newtheorem{definition}[theorem]{Definition}
\def\({\left(}
\def\){\right)}  
\newcommand{\oldqed}{}
\def\endofClaim{\hfill\scalebox{.6}{$\Box$}}
\newcommand{\ZZ}{\mathbb{Z}}
\newcommand{\PP}{\mathbb{P}}
\newcommand{\RR}{\mathbb{R}}
\newcommand{\EE}{\mathbb{E}}
\let\epsilon\varepsilon
\let\phi\varphi
\DeclareMathOperator{\AP}{AP}
\DeclareMathOperator{\diam}{diam}
\DeclareMathOperator{\lcm}{lcm}
\newcommand*\patchAmsMathEnvironmentForLineno[1]{%
\expandafter\let\csname old#1\expandafter\endcsname\csname #1\endcsname
\expandafter\let\csname oldend#1\expandafter\endcsname\csname end#1\endcsname
\renewenvironment{#1}%
{\linenomath\csname old#1\endcsname}%
{\csname oldend#1\endcsname\endlinenomath}}%
\newcommand*\patchBothAmsMathEnvironmentsForLineno[1]{%
\patchAmsMathEnvironmentForLineno{#1}%
\patchAmsMathEnvironmentForLineno{#1*}}%
\begin{document}
\shortdate
\yyyymmdddate
\settimeformat{ampmtime}
\footskip=28pt
\allowdisplaybreaks

\title{A blurred view of Van der Waerden type theorems}

\author{Vojtech R\"{o}dl}

\author{Marcelo Sales}

\thanks{The first author was supported by NSF grant DMS 1764385.\\ The second author was partially supported by NSF grant DMS 1764385}

\address{Department of Mathematics, Emory University, 
    Atlanta, GA, USA}
\email{\{vrodl|mtsales\}@emory.edu}

\begin{abstract}
 Let $\AP_k=\{a,a+d,\ldots,a+(k-1)d\}$ be an arithmetic progression. For $\epsilon>0$ we call a set $\AP_k(\epsilon)=\{x_0,\ldots,x_{k-1}\}$ an $\epsilon$-approximate arithmetic progression if for some $a$ and~$d$,~$|x_i-(a+id)|<\epsilon d$ holds for all $i\in\{0,1\ldots,k-1\}$. Complementing earlier results of Dumitrescu \cite{D11}, in this paper we study numerical aspects of Van der Waerden, Szemer\'edi and Furstenberg--Katznelson like results in which arithmetic progressions and their higher dimensional extensions are replaced by their $\epsilon$-approximation.
\end{abstract}

\dedicatory{Dedicated to the memory of Ronald Graham}

\maketitle

\section{Introduction}\label{sec:intro}

For a natural number $N$ we set $[N]=\{1,2\ldots,N\}$. Assume that $[N]$ is colored by $r$ colors. We denote by 
\begin{align*}
    N \rightarrow (\AP_k)_r
\end{align*}
the fact that any such $r$-coloring yields a monochromatic arithmetic progression $\AP_k$ of length $k$. With this notation the well known Van der Waerden's theorem can be stated as follows.

\begin{theorem}\label{th:Vanderwaerden}
For every positive integers $r$ and $k$, there exists a positive integer $N$ such that~$N~\rightarrow~(\AP_k)_r$.
\end{theorem}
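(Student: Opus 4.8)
The plan is to prove Theorem~\ref{th:Vanderwaerden} by the classical double induction of Van der Waerden, organized around the notion of a focused fan. Write $W(k,r)$ for the least $N$ such that $N\to(\AP_k)_r$; I will argue by induction on $k$ that $W(k,r)<\infty$ for every $r$. The cases $k=1$ (trivial) and $k=2$ (pigeonhole: $W(2,r)=r+1$) are immediate, so from now on fix $k\ge 3$ and assume inductively that $W(k-1,s)<\infty$ for every number of colours $s$ (this is where the induction on $k$ is used, with $s$ allowed to be much larger than $r$).

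Call a \emph{fan of degree $m$}, relative to a fixed $r$-colouring, a family of $m$ arithmetic progressions $P_1,\dots,P_m$, each of length $k-1$, say $P_i=\{a_i,a_i+d_i,\dots,a_i+(k-2)d_i\}$, all sharing the same \emph{focus} $F:=a_i+(k-1)d_i$, such that each $P_i$ is monochromatic and the colours of $P_1,\dots,P_m$ are pairwise distinct. The engine of the proof is the auxiliary claim, proved by an inner induction on $m$: for every $m$ there is an $N_m$ such that every $r$-colouring of $[N_m]$ either contains a monochromatic $\AP_k$ or contains a fan of degree $m$ whose focus satisfies $F\le N_m$. The base case $m=1$ is just $W(k-1,r)$ applied inside a slightly enlarged interval, so that the focus of the monochromatic $\AP_{k-1}$ it produces still lies in range.

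For the step from $m$ to $m+1$, take $N_{m+1}$ large enough to split an initial interval into $2\,W(k-1,r^{N_m})$ consecutive blocks of length $N_m$, and colour each block by its induced pattern, one of at most $r^{N_m}$ possibilities; by the definition of $W(k-1,r^{N_m})$ there is a progression of $k-1$ equal-pattern blocks whose focus block is still present. Inside the first of these blocks we either already find a monochromatic $\AP_k$, or we find a fan $P_1,\dots,P_m$ of degree $m$ with focus $f$ and colours $c_1,\dots,c_m$. Now replace each common difference $d_i$ by $d_i+g$, where $g$ is the (integer) gap carrying one block to the next in the chosen block-progression: this keeps every term of the new $P_i'$ at a fixed position inside its block, so equality of the block patterns forces $P_i'$ to remain monochromatic in colour $c_i$, while its focus is now the common point $F$ lying in the reserved focus block. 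Moreover the constant-position progression through the copies of $f$ across these blocks is itself a monochromatic length-$(k-1)$ progression with the same focus $F$; if its colour equals some $c_i$ we are done by extending $P_i$ with $f$, and otherwise $P_1',\dots,P_m'$ together with this extra progression form a fan of degree $m+1$.

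Finally take $m=r$: the resulting fan uses $r$ pairwise distinct colours, so \emph{every} colour occurs among $P_1,\dots,P_r$, hence whatever colour the focus $F$ receives matches some $c_i$, and $P_i\cup\{F\}$ is the desired monochromatic $\AP_k$; thus $W(k,r)\le N_r<\infty$. The step I expect to be the main obstacle — and would write out in full — is precisely the translation argument: one must verify that adding the inter-block gap $g$ to each $d_i$ leaves every term of $P_i$ in the same within-block position, so that ``same pattern'' genuinely implies ``same colour'', and that the new focus lands inside the block that was reserved for it (which is what forces the factor $2$ in the number of blocks). I would also double-check the bookkeeping that keeps all foci and all progressions within $[N_{m+1}]$. (The statement also follows instantly from the Hales--Jewett theorem, or from Szemer\'edi's theorem, but the self-contained fan argument is the one I would present here.)
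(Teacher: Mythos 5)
Theorem~\ref{th:Vanderwaerden} is the classical Van der Waerden theorem, which the paper states as background and does not prove (the paper's contribution begins with the $\epsilon$-approximate versions). So there is no proof in the paper to compare against; I will assess your argument on its own.

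Your write-up is the standard color-focusing double induction, and it is correct. The outer induction on $k$ together with the inner induction on the fan degree $m$ is exactly the usual mechanism, and the crux points check out: replacing each $d_i$ by $d_i+g$, where $g=eN_m$ is the inter-block step of the chosen block-progression, sends the $j$\textsuperscript{th} term $a_i+jd_i$ to $a_i+jd_i+jg$, which sits at the same within-block offset in block $b_1+je$; since those $k-1$ blocks carry identical patterns, each $P_i'$ stays monochromatic in $c_i$, and all new foci collapse to $F=f+(k-1)g$ in the reserved focus block. The factor $2$ in the block count is the right bookkeeping: if the block-AP lives in the first $W(k-1,r^{N_m})$ block indices with common difference $e$, then the focus index is at most $2W(k-1,r^{N_m})-1$. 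Likewise $N_1=2W(k-1,r)$ suffices for the base case $m=1$, since for $k\ge 3$ the difference of a monochromatic $\AP_{k-1}$ inside $[W(k-1,r)]$ is at most $(W(k-1,r)-1)/(k-2)$, so its focus stays below $2W(k-1,r)$. Taking $m=r$ then forces a color collision between the focus and one of the $r$ arms, closing the outer induction with $W(k,r)\le N_r$. One presentational remark: when $c'$ equals some $c_i$ you correctly extend $P_i$ by $f$ rather than $P_i'$ by $F$; it is worth stating explicitly that it is the \emph{old} fan and its focus $f$ that get completed there, since $F$'s color is unknown at that point. Also be explicit that a ``fan of degree $1$'' is just a single monochromatic $\AP_{k-1}$ with in-range focus, so the distinct-colors condition is vacuous. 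With those small clarifications, the argument is complete and matches the textbook proof (Graham--Rothschild--Spencer style); the Hales--Jewett or Szemer\'edi routes you mention would also do, but the self-contained fan argument is the natural choice here.
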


The minimum $N$ with the property of Theorem \ref{th:Vanderwaerden} is called the Van der Waerden number of $r, k$ and is denoted by $W(k,r)$. In other words, $W(k,r)$ is the minimum integer $N$ such that any $r$-coloring of $[N]$ contains a monochromatic arithmetic progression of length $k$. Much effort was put to determine lower and upper bounds for $W(k,r)$, but the problem remains widely open. As an illustration, the best known bounds for $W(k,2)$ are
\begin{align*}
    \frac{2^k}{k^{o(1)}}\leq W(k,2) \leq 2^{2^{2^{2^{2^{k+9}}}}},
\end{align*}
where $o(1)\rightarrow 0$ as $k\rightarrow \infty$. The lower bound is due to Szabo \cite{Sz90} while the upper bound is a celebrated result of Gowers on Szemer\'edi's theorem \cite{G01}. It is good to remark that when $k$ is a prime the lower bound can be improved to $W(k+1,2)\geq k2^k$ by a construction of Berlekamp~\cite{B68}.

Ron Graham was keenly interested in the research leading to improvements of the upper bound of $W(k,2)$ and motivated it by monetary prizes. Currently open is his $\$1000$ award for the proof that $W(k,2)<2^{k^2}$ (see \cite{G08}). During his career he also contributed to related problems in the area (see \cite{BGL99, Gn86, RG06}). For instance, together with Erd\H{o}s \cite{EG79}, Graham proved a canonical version of Van der Waerden: Every coloring of $\mathbb{N}$, not necessarily with finitely many colors, contains either an monochromatic arithmetic progression or a rainbow arithmetic progression, i.e., a progression with every element of distinct color.

Inspired by the works of \cite{D11} and \cite{HKSS19}, we are interested in the related problem where we replace an arithmetic progression by an perturbation of it.

\begin{definition}\label{def:ap_k(epsilon)}
Given $\epsilon>0$, a set $X=\{x_0,\ldots,x_{k-1}\}\subseteq [N]$ is an $\epsilon$-approximate $\AP_k(\epsilon)$ of an arithmetic progression of length $k$ if there exists $a \in \RR$ and $d>0$ such that $|x_i-(a+id)|<\epsilon d$.
\end{definition}

In other words, an $\AP_k(\epsilon)$ is just a transversal of $\bigcup_{i=0}^{k-1}B(a+id; \epsilon d)$, where $B(a+id; \epsilon d)$ is the open ball centered at $a+id$ of radius $\epsilon d$. Depending on the choice of $\epsilon$, an $\AP_k(\epsilon)$ can be different from an $\AP_k$. For example, if $\epsilon=1/3$, then $a=0.8$ and $d=2.4$ testifies that~$\{1,3,6\}$ is an $\epsilon$-approximate arithmetic progression of length 3, but it is not an arithmetic progression itself. 

For integers $r$, $k$ and $\epsilon>0$, let 
\begin{align*}
    W_{\epsilon}(k,r)=\min\{N:\: N\rightarrow (\AP_k(\epsilon))_r\}.
\end{align*}
That is, $W_{\epsilon}(k,r)$ is the smallest $N$ with the property that any coloring of $[N]$ by $r$ colors yields a monochromatic $\AP_k(\epsilon)$. Our first result shows that one can obtain sharper bounds to the Van der Waerden problem by replacing $\AP_k$ to $\AP_k(\epsilon)$.

\begin{theorem}\label{th:approximatewaerden}
Let $r\geq 1$. There exists a positive constant $\epsilon_0$ and a real number $c_r$ depending on $r$ such that the following holds. If $0<\epsilon\leq \epsilon_0$ and $k\geq 2^rr!\epsilon^{-1}\log^r(1/5\epsilon)$, then
\begin{align*}
c_r\frac{k^r}{\epsilon^{r-1}\log(1/\epsilon)^{\binom{r+1}{2}-1}}\leq W_{\epsilon}(k,r) \leq \frac{2k^r}{\epsilon^{r-1}}.
\end{align*}
\end{theorem}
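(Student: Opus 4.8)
The two bounds call for entirely different methods: the upper bound is a short induction on $r$ exploiting the geometric slack in the definition of $\AP_k(\epsilon)$, while the lower bound needs an explicitly constructed, carefully layered colouring and is where essentially all the work lies. The case $r=1$ of both is immediate: $[k-1]$ has fewer than $k$ elements so $W_\epsilon(k,1)\ge k$, while $\{1,\dots,k\}\subseteq[k]$ is an exact $\AP_k$, so in fact $W_\epsilon(k,1)=k$, which matches both sides for any $c_1\le 1$.

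For the upper bound I would prove $W_\epsilon(k,r)\le 2k^r/\epsilon^{r-1}$ by induction on $r$, the base case being the observation above. For the inductive step put $N=2k^r/\epsilon^{r-1}$ and $W=W_\epsilon(k,r-1)\le 2k^{r-1}/\epsilon^{r-2}$, fix an $r$-colouring of $[N]$, set $D=\lceil W/(2\epsilon)\rceil+1$, and consider the $k$ intervals $I_i=(iD,\,iD+2\epsilon D)$ for $i=0,\dots,k-1$. A direct computation using $\epsilon\le 1/2$ and the inductive bound on $W$ shows that each $I_i$ lies inside $[N]$ and contains at least $W$ consecutive integers. Now either some $I_i$ uses at most $r-1$ colours, and then the inductive hypothesis applied to that interval (a translate of $[\,|I_i\cap\ZZ|\,]$, and $\AP_k(\epsilon)$ is translation invariant) yields a monochromatic $\AP_k(\epsilon)$ inside it; or every $I_i$ uses all $r$ colours, and then we pick $x_i\in I_i$ of colour $1$ for each $i$: since $\epsilon\le 1/2$ the $x_i$ are distinct, and with $a=\epsilon D$, $d=D$ one has $|x_i-(a+id)|<\epsilon d$ for all $i$, so $\{x_0,\dots,x_{k-1}\}$ is a monochromatic $\AP_k(\epsilon)$. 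Either way we are done, and bookkeeping the constants (the hypotheses on $k$ and $\epsilon$ comfortably absorb the rounding) gives the claimed bound; note the geometric slack is crucial here, since it makes \emph{any} transversal of one colour across the balls $B(a+iD,\epsilon D)$ a valid approximate progression.

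For the lower bound the target $N\approx k^r\big/\big(\epsilon^{r-1}(\log1/\epsilon)^{\binom{r+1}{2}-1}\big)$ can be rewritten as $N\approx k\cdot\prod_{i=2}^{r}\frac{k}{\epsilon(\log1/\epsilon)^{i}}$, which suggests a recursive construction: starting from an $(r-1)$-colouring of $[M]$ with $M=W_\epsilon(k,r-1)-1$ and no monochromatic $\AP_k(\epsilon)$, build an $r$-colouring of $[N]$ with $N\approx\frac{k}{C_r\,\epsilon(\log1/\epsilon)^{r}}\,M$ and no monochromatic $\AP_k(\epsilon)$. The plan is to partition $[N]$ into $\approx N/M$ consecutive blocks of length $M$, place suitably dilated copies of the $(r-1)$-colouring on them to kill progressions whose common difference $d$ is small compared with $M$, and deploy the new colour $r$ on a structured, scale-separated, aperiodic set $S$ — chosen sparse enough that $S$ contains no $\AP_k(\epsilon)$ of its own, yet meeting every window of the relevant length so that for each ``large-$d$'' candidate progression some ball $B(a+id,\epsilon d)$ lies entirely in $S$ and some avoids $S$ altogether. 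Any monochromatic $\AP_k(\epsilon)$ in the result has a well-defined common difference $d$ — its consecutive gaps lie within a factor $\tfrac{1+2\epsilon}{1-2\epsilon}$ of one another — and one then argues by cases on $d$ relative to $M$ and to the internal scales of $S$.

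The main obstacle is precisely this case analysis in the borderline regime, where $d$ is comparable to a structural unit (a block length, or a spacing inside $S$): there the jitter $\epsilon d$ has the same order as that unit, so an adversarial progression can try to ``hide'' by placing each of its $k$ points near a boundary of the structure, and excluding this is what forces the scales to be separated generously — reducing the dilation available at level $r$ by a further factor $(\log1/\epsilon)^{r}$ compared with the clean $k/\epsilon$ of the upper bound. Summing these losses over levels $i=2,\dots,r$ produces the exponent $2+3+\dots+r=\binom{r+1}{2}-1$, and the hypothesis $k\ge 2^{r}r!\,\epsilon^{-1}\log^{r}(1/5\epsilon)$ is exactly what makes every block (and every scale of $S$) large enough to carry the next layer of the construction while keeping its colour-$r$ portion too short to contain an $\AP_k(\epsilon)$.
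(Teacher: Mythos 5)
Your upper bound is sound and follows essentially the same route as the paper: an induction on $r$ in which either some block sees at most $r-1$ colors (recurse inside it) or all $k$ blocks see all $r$ colors (pick a monochromatic transversal, which is an $\AP_k(\epsilon)$ because the inter-block spacing $D$ dominates the block width). You use $k$ short intervals $I_i$ where the paper uses a discrete iterated blow-up $B_r$, a cosmetic difference, and your bookkeeping is fine: with $D=\lceil W/(2\epsilon)\rceil+1$ one has $|I_i\cap\ZZ|\geq W$ and $(k-1+2\epsilon)D\leq 2k^r/\epsilon^{r-1}$ for $W\leq 2k^{r-1}/\epsilon^{r-2}$ and small $\epsilon$.

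The lower bound, however, is a roadmap rather than a proof, and two concrete gaps remain. First, your proposed coloring (color $r$ "deployed on a sparse, scale-separated, aperiodic set $S$") is not how such a construction can close. In the paper's coloring all $r$ colors are treated symmetrically: each $D_j$-sub-block $Z_{u,v}^{i,j}$ forbids exactly color $v$, so for any fixed color $c$ the admissible region is a \emph{periodic} $(r-1,1;D_j)$-alternate labeling, which is precisely the structure to which Lemma \ref{lem:water} applies; an aperiodic $S$ special to color $r$ has no analogous lemma and you supply none. Second, and more importantly, you give no counterpart to the device that makes the case analysis on $d$ close: the paper uses $\approx\log(1/\epsilon)$ distinct block sizes $D_1,\dots,D_{s/2}$ taken as consecutive multiples of a base unit, and Proposition \ref{prop:dunhappy} --- resting on $\lcm(1,\dots,s)\geq e^{0.9s}$ from the Chebyshev bound of Theorem \ref{th:psi} --- shows every $d$ in the dangerous range is far from all multiples of $D_{j_0}/(r-1)!$ for at least one $j_0$. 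That is exactly what lets Lemma \ref{lem:water} cap $|X\cap Y_{i,j_0}|$ and, through Propositions \ref{prop:simplelowerbound} and \ref{prop:largeprop}, force a contradiction with the inductive coloring. You have correctly guessed the shape of the loss ($\log^i(1/\epsilon)$ at level $i$, summing to $\binom{r+1}{2}-1$), but guessing the exponent is not a proof: the "borderline $d$" case you single out as the obstacle is left unresolved, and the mechanism resolving it (multi-scale block sizes plus the $\lcm$ bound) is absent from your plan.
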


Similar as in the previous discussion we will write $N\rightarrow_{\alpha} \AP_k$ (or $N\rightarrow_{\alpha} \AP_k(\epsilon)$) to denote that any subset $S\subseteq [N]$ with $|S|\geq \alpha N$ necessarily contains an arithmetic progression $\AP_k$ (or $\AP_k(\epsilon)$, respectively). Answering a question of Erd\H{o}s and Turan \cite{ET36}, Szemer\'edi proved the following celebrated result:

\begin{theorem}\label{th:szemeredi}
For any $\alpha>0$ and a positive integer $k$, there exists an integer $N_0$ such that for every $N\geq N_0$ the relation $N\rightarrow_{\alpha} \AP_k$ holds.
\end{theorem}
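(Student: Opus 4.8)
The plan is not to reprove Theorem~\ref{th:szemeredi} from scratch --- it is Szemer\'edi's theorem, one of the landmark results in additive combinatorics --- but to recall the available routes and indicate which is most convenient for the approximate versions studied later. First I would isolate the reduction between the two forms in which the statement is usually met: the finitary density statement $N\rightarrow_{\alpha}\AP_k$ for all large $N$, and the infinitary statement that every $A\subseteq\NN$ of positive upper density contains a $k$-term progression. These are equivalent by a routine compactness argument: if some $\alpha>0$ and $k$ admitted no threshold $N_0$, one would have arbitrarily large $N$ with sets $S_N\subseteq[N]$, $|S_N|\ge\alpha N$, free of $\AP_k$; averaging the indicator functions over the translates inside a long interval and passing to a subsequential limit produces a subset of $\NN$ of upper density at least $\alpha$ with no $k$-term progression.

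For the infinitary statement I would cite one of the four standard proofs. Szemer\'edi's original argument is purely combinatorial, running a delicate double induction on $k$ and introducing the regularity lemma to control the additive structure of $S$. Furstenberg's proof uses the correspondence principle to recast the problem as multiple recurrence for a measure-preserving system, which he then establishes by a structure theorem splitting the system into compact and weakly mixing pieces. Gowers's proof is Fourier-analytic: one controls $k$-term progression counts by the Gowers uniformity norm $U^{k-1}$, proves an inverse theorem describing sets with large norm, and iterates a density increment; this is the route that yields the explicit tower-type bound on $W(k,2)$ quoted in the introduction. Finally, the theorem also follows from the hypergraph removal lemma. For the qualitative statement needed here any of these may be invoked as a black box.

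The main obstacle is simply that there is no short proof: every known argument is substantial and essentially self-contained, so ``proving'' Theorem~\ref{th:szemeredi} in this paper amounts to selecting the reference that best serves the sequel. The only genuinely new ingredient relevant to us is the passage from a (quantitative) form of Szemer\'edi's theorem to its $\AP_k(\epsilon)$-analogue, obtained by covering $[N]$ with the balls $B(a+id;\epsilon d)$ and noting that a set of density $\alpha$ must already be dense in a positive-density sub-collection of the centres $a+id$; that step, however, belongs to the next section and not to the proof of Theorem~\ref{th:szemeredi} itself.
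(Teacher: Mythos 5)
You are right that the paper does not prove Theorem~\ref{th:szemeredi} at all; it is stated as Szemer\'edi's classical result answering Erd\H{o}s and Tur\'an, cited and then used as a black box. Your survey of the standard proof routes and the compactness equivalence between the finitary and density forms is accurate, and your decision to invoke it by reference rather than reprove it matches exactly what the paper does.
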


Basically Szemer\'edi theorem states that any positive proportion of $\mathbb{N}$ contains an arithmetic progression of length $k$. Not much later Furstenberg \cite{F77} gave an alternative proof of Theorem~\ref{th:szemeredi} using Ergodic theory. Extending \cite{F77}, Furstenberg and Katznelson \cite{FK78} were able to prove a multidimensional version of Szemer\'edi's theorem:

An $m$-dimensional cube $C(m,k)$ is a set of $k^m$ points in $m$-dimensional Euclidean lattice~$\ZZ^m$ such that
\begin{align*}
    C(m,k)=\{\vec{a}+d\vec{v}:\: \vec{a}=(a_1,\ldots,a_m) \in \ZZ^m \text{ and } \vec{v}=(v_1,\ldots,v_m) \in \{0,1,\ldots,k-1\}^m\}.
\end{align*}
That is, $C(m,k)$ is a homothetic translation of $[k]^m$. As in the one dimensional case, for~$\alpha>0$ and integers $m$, $k$ and $N$ we will write $[N]^m \rightarrow_{\alpha} C(m,k)$ to mean that any subset~$S \subseteq [N]^m$ with $|S|\geq \alpha N^m$ contains a cube $C(m,k)$. The following is the multidimensional version of Theorem \ref{th:szemeredi} proved in \cite{FK78}.

\begin{theorem}\label{th:furstenberg}
For any $\alpha>0$ and positive integers $k$ and $m$, there exists an integer $N_0$ such that for every $N\geq N_0$ the relation $[N]^m \rightarrow_{\alpha} C(m,k)$ holds
\end{theorem}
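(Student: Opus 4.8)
The plan is to derive Theorem~\ref{th:furstenberg} from the hypergraph removal lemma, following the combinatorial route to the multidimensional Szemer\'edi theorem (Gowers; Nagle--R\"odl--Schacht--Skokan; Tao). First I would pass to the ``homothetic copy'' formulation: it suffices to prove that for every $\alpha>0$ and all $k,m$ there is $N_0$ so that every $N\ge N_0$ and every $S\subseteq[N]^m$ with $|S|\ge\alpha N^m$ admits $\vec a\in\ZZ^m$ and an integer $d>0$ with $\vec a+d\vec v\in S$ for all $\vec v\in\{0,1,\dots,k-1\}^m$; this is precisely a copy of $C(m,k)$. A routine averaging (``popularity'') argument would then boost a single copy to many copies, but existence is all the stated theorem asks for.

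Next I would encode homothetic copies of $[k]^m$ as copies of one fixed finite pattern in an auxiliary hypergraph. A homothetic copy is pinned down by the $m+1$ parameters $a_1,\dots,a_m,d$, and the $j$-th coordinate of the point labelled by $\vec v\in\{0,\dots,k-1\}^m$ is the linear form $a_j+v_jd$. Introduce $m+1$ vertex classes, one ``carrying'' each $a_j$ (bundled with the scale) and one carrying a suitable ``diagonal'' form in $a_1,\dots,a_m,d$, each class an interval of integers of size $O(N)$; for every point of $S$ and every consistent completion of the remaining freedom, place one edge on the corresponding transversal of the classes. The design is to be arranged so that the transversals ``supported by $S$'' that encode the whole system $\{\vec a+d\vec v\in S:\vec v\in\{0,\dots,k-1\}^m\}$ are exactly the copies in this hypergraph of a single $N$-independent complete partite pattern $K$ depending only on $k$ and $m$ (after a short preliminary reduction one may even take $K$ to be the $(m+1)$-vertex $m$-uniform simplex $K^{(m)}_{m+1}$ and invoke the simplex removal lemma), with the copies arising from $d=0$ being the only ``degenerate'' ones.

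Then I would apply hypergraph removal. If $S$ contained no genuine homothetic copy of $[k]^m$, every copy of $K$ in the auxiliary hypergraph would be degenerate; these degenerate copies are a vanishing fraction of all potential copies, so the removal lemma would let us destroy all copies of $K$ by deleting a negligible fraction of the edges. But the degenerate copies, taken over all points of $S$, form an edge-disjoint family whose destruction already forces deleting a positive fraction of the edges (each point of $S$ contributes its own edge), a contradiction once $N$ is large in terms of $\alpha,k,m$. Hence $S$ contains a copy of $C(m,k)$, as required.

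The main obstacle is the bookkeeping in the encoding step: one must choose the vertex classes and the linear forms so that (i) homothetic copies of $[k]^m$ correspond bijectively to copies of a \emph{single}, $N$-independent complete partite hypergraph $K$; (ii) the degeneracies are exactly the $d=0$ configurations; and (iii) each point of $S$ sits in a genuinely edge-disjoint family of forbidden degenerate copies, so that the removal lemma actually bites. Once this encoding is fixed, the remainder is a black-box application of hypergraph removal. Alternatively, one can run the original Furstenberg--Katznelson argument: apply the correspondence principle to produce commuting measure-preserving transformations $T_1,\dots,T_m$ on a probability space $(X,\mathcal{B},\mu)$ and establish the multiple recurrence estimate $\liminf_{N\to\infty}\frac{1}{N}\sum_{n=0}^{N-1}\mu\!\big(\bigcap_{\vec v}(T_1^{v_1 n}\cdots T_m^{v_m n})^{-1}A\big)>0$ via a structure theorem for the relevant characteristic factors; the removal-lemma route, however, keeps the whole proof finitary.
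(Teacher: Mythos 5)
Theorem~\ref{th:furstenberg} is not proved in the paper: it is quoted as background, attributed to Furstenberg and Katznelson~\cite{FK78} (ergodic proof), with the later combinatorial proofs via hypergraph regularity/removal cited as \cite{G07, RS04, NRS06}. So there is no in-paper argument to compare against; what you have written is a reconstruction of a known theorem, and I will assess it on its own.

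The overall shape of your removal-lemma outline is the standard one and is sound: pass to homothetic copies of $\{0,\dots,k-1\}^m$, encode them as simplices in an auxiliary hypergraph, observe that if $S$ contains no genuine copy then every copy in the hypergraph is a degenerate $d=0$ copy, note that these degenerate copies are pairwise edge-disjoint and there are $|S|\geq\alpha N^m$ of them, and derive a contradiction from the removal lemma. However, your encoding step contains a concrete error. You propose $m+1$ vertex classes (one per parameter $a_1,\dots,a_m,d$ of the homothetic copy) and assert that the pattern $K$ can be taken to be $K^{(m)}_{m+1}$. This cannot be right: the configuration $C(m,k)$ consists of $\ell=k^m$ points, and in the removal approach each point of the configuration must be pinned by its own edge so that the degenerate copies coming from distinct points of $S$ are edge-disjoint. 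With only $m+1$ classes there are only $m+1$ possible $m$-uniform edge ``types,'' so at most $m+1$ of the $k^m$ membership constraints can be enforced. Worse, any point $\vec a + d\vec v$ with $\vec v\notin\{\vec 0,e_1,\dots,e_m\}$ depends on \emph{all} $m+1$ parameters, so no $m$-element subset of your classes determines it, and no $m$-uniform edge can express ``this point lies in $S$.'' Already for $m=1$, $k=3$ your recipe gives $K^{(1)}_2$, while Roth's theorem via removal genuinely needs triangle removal, i.e.\ $K^{(2)}_3$. The correct bookkeeping is the classical one: linearly embed $\{0,\dots,k-1\}^m$ into $\ZZ^{\ell-1}$ so that a homothetic copy of it becomes a simplex $\{\vec b,\vec b+de_1,\dots,\vec b+de_{\ell-1}\}$, reduce to the corners theorem in $\ZZ^{\ell-1}$, and then run your argument with $\ell=k^m$ vertex classes, an $(\ell-1)$-uniform hypergraph, and pattern $K^{(\ell-1)}_{\ell}$. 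With this fix the rest of your outline (degeneracy $\Leftrightarrow d=0$, edge-disjointness, contradiction with removal) is correct. The ergodic alternative you mention at the end is Furstenberg--Katznelson's original proof and is exactly what the paper cites.
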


Define $f(N,m,k)$ as the maximum size of a subset $A \subseteq [N]^m$ without a cube $C(m,k)$. Note that $f(N,1,k)$ corresponds to the maximal size of a subset $A\subseteq [N]$ without an arithmetic progression $\AP_k$. Theorems \ref{th:szemeredi} and $\ref{th:furstenberg}$ give us that $f(N,m,k)=o(N^m)$. Determining bounds for $f(N,m,k)$ is a long standing problem in additive combinatorics. For $m=1$ the best current bounds are
\begin{align*}
N \exp\left(-c_k(\log N)^{1/\lceil\log_2 k \rceil}\right)    \leq f(N,1,k)\leq\frac{N}{(\log \log N)^{2^{-2^{k+9}}}}
\end{align*}
where $c_k$ is a positive constant depending only on $k$. The upper bound is due to Gowers \cite{G01}, while the lower bound with best constant $c_k$ is due to O'Bryant \cite{OB11}.

For larger $m$ it is worth mentioning that Furstenberg--Katznelson proof of Theorem \ref{th:furstenberg} uses Ergodic theory and gives us no quantitative bounds on $f(N,m,K)$. Purely combinatorial proofs were given later based on the hypergraph regularity lemma in \cite{G07} and \cite{RS04, NRS06}. Those proofs give quantitative bounds which are incomparably weaker than the one for $m=1$. For instance, in \cite{MS19} Moshkovitz and Shapira proved that the hypergraph regularity lemma gives a bound of the order of the $k$-th Ackermann function.

Now we consider $\epsilon$-approximate versions of Theorems \ref{th:szemeredi} and \ref{th:furstenberg}. 

\begin{definition}\label{def:cube}
Given $\epsilon>0$, a set $X=\{x_{\vec{v}}:\: \vec{v} \in \{0,1,\ldots,k-1\}^m\}\subseteq [N]^m$ is an $\epsilon$-approximate cube $C_{\epsilon}(m,k)$ if there exists $\vec{a} \in \RR^m$ and $d>0$ such that $||x_{\vec{v}}-(\vec{a}+d\vec{v})||<\epsilon d.$ 
\end{definition}

For integers $N$, $m$, $k$ and $\epsilon>0$, let $f_{\epsilon}(N,m,k)$ be the maximal size of a subset~$A~\subseteq~[N]^m$ without an $C_{\epsilon}(m,k)$. Dimitrescu showed an upper bound for $f_{\epsilon}(N,m,k)$ in \cite{D11}. We complement his result by also providing a lower bound to the problem.

\begin{theorem}\label{th:approximateszemeredi}
Let $m\geq 1$ and $k\geq 3$ be integers and $0<\epsilon<1/125$. Then there exists an integer $N_0:=N_0(k,\epsilon)$ and positive constants $c_1$ and $c_2$ depending only on $k$ and $m$ such that
\begin{align*}
    N^{m-c_1(\log(1/\epsilon))^{\frac{1}{\ell}-1}}\leq f_{\epsilon}(N,m,k)\leq N^{m-c_2(\log(1/\epsilon))^{-1}},
\end{align*}
for $N\geq N_0$ and $\ell=\lceil \log_2 k \rceil$.
\end{theorem}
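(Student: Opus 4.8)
The upper bound $f_\epsilon(N,m,k)\le N^{m-c_2(\log(1/\epsilon))^{-1}}$ is the one already obtained by Dumitrescu~\cite{D11}, so I would simply invoke it; if a slightly different form is needed it also follows from a short multiscale pigeonhole argument (passing repeatedly to the densest dyadic sub-box, a set of the stated size fills almost all of a coarse grid at some scale, where an $\epsilon$-approximate cube is immediate). The real content is the lower bound, and the first move is to reduce it to $m=1$: it suffices to construct, for every large $N$, a set $A\subseteq[N]$ with no $\AP_k(\epsilon)$ and $|A|\ge N^{1-c(\log(1/\epsilon))^{1/\ell-1}}$ for a constant $c=c(k)$. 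Then $A^m\subseteq[N]^m$ has $|A^m|=|A|^m$ and contains no $C_\epsilon(m,k)$, because in an $\epsilon$-approximate cube $\{x_{\vec v}\}\subseteq A^m$ with parameters $(\vec a,d)$, freezing the last $m-1$ coordinates of $\vec v$ and letting the first run over $\{0,\dots,k-1\}$ yields $k$ elements of $A$ lying within $\epsilon d$ of $a_1,a_1+d,\dots,a_1+(k-1)d$, pairwise distinct since $\epsilon<1/2$ — an $\AP_k(\epsilon)$ in $A$. This gives $c_1=m\,c(k)$.

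For the one-dimensional set the plan is a self-similar ``base $1/\epsilon$'' construction wrapped around an $\AP_k$-free set. Put $M=\lceil1/\epsilon\rceil$ (which I may take prime), and fix a set $A_0\subseteq\ZZ/M\ZZ$ that avoids every length-$k$ progression of every common difference, and in fact avoids every additive perturbation of such a progression by a vector of sup-norm $O_k(1)$; by the Rankin/O'Bryant power-sums-of-digits construction \cite{OB11}, with a constant-factor cushion built into the digit ranges, one can take $|A_0|\ge M^{1-c_0(\log M)^{1/\ell-1}}$ with $c_0=c_0(k)$. Now let $A=\{n\in[N]:\text{every base-}M\text{ digit of }n\text{ lies in }A_0\}$. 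Writing $N=M^t$ (truncation costs a factor $M$, harmless once $N\ge N_0(k,\epsilon)$) gives $|A|=|A_0|^t=N^{1-c_0(\log M)^{1/\ell-1}}$, and since $\log M\asymp\log(1/\epsilon)$ this has the required size.

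To check that $A$ contains no $\AP_k(\epsilon)$, suppose $x_0,\dots,x_{k-1}\in A$ satisfy $|x_i-(a+id)|<\epsilon d$, and pick $s$ with $M^s\le d<M^{s+1}$. Then the window radius obeys $\epsilon d<(\epsilon M)M^s<2M^s$, so for each $i$ the base-$M$ digit of $x_i$ in position $s$ agrees, up to an $O(1)$ error in $\ZZ/M\ZZ$, with that of $a+id$. As $i$ runs over $\{0,\dots,k-1\}$, the position-$s$ digit of $a+id$ is the reduction mod $M$ of the Beatty progression $\lfloor a/M^s+i\rho\rfloor$ with $\rho=d/M^s\in[1,M)$, i.e.\ an $O_k(1)$-perturbation of a genuine length-$k$ progression in $\ZZ/M\ZZ$ of common difference $\lfloor\rho\rfloor\ge1$. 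Combining the two $O(\cdot)$ errors produces an $O_k(1)$-perturbed length-$k$ progression in $\ZZ/M\ZZ$ lying entirely inside $A_0$, contradicting the choice of $A_0$.

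The step I expect to be the main obstacle is the construction of the input $A_0$: one needs an $\AP_k$-free subset of $\ZZ/M\ZZ$ that simultaneously (i) has the Rankin/O'Bryant density $M^{1-\Theta_k((\log M)^{1/\ell-1})}$, (ii) is robust under additive perturbations of size $O_k(1)$, and (iii) forbids progressions of all common differences at once. Calibrating the size of this cushion against the $O_k(1)$ ``blur'' that appears precisely when the common difference of an $\AP_k(\epsilon)$ lies strictly between two powers of $M$, without spoiling the exponent $(\log(1/\epsilon))^{1/\ell-1}$, is the technical heart of the argument; the rest is routine bookkeeping of carries and of the Beatty-to-arithmetic-progression comparison.
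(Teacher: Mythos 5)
Your overall strategy for the lower bound is the same as the paper's (reduce to $m=1$; take numbers in $[N]$ whose base-$q$ digits, $q\asymp1/\epsilon$, all come from an $\AP_k$-free alphabet; argue that a blurred progression would force a structured pattern among the digits), and your $m$-dimensional reduction is correct, though the paper uses $A\times[N]^{m-1}$ rather than $A^m$ and thereby gets a $c_1$ that does not pick up an extra factor of $m$ (both fit the statement). The upper bound via Dumitrescu is acceptable; the paper re-derives it with an iterated blow-up plus a random-translation averaging argument (Lemmas~\ref{lem:upperszemeredi} and~\ref{lem:average}) mainly to tighten constants.

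Where your proposal genuinely diverges, and where it has a real gap, is in how the one-dimensional contradiction is extracted. You pick the digit position $s$ according to the scale of the common difference, $M^s\le d<M^{s+1}$, and then need an alphabet $A_0\subseteq\ZZ/M\ZZ$ that forbids \emph{every} $O_k(1)$-perturbed length-$k$ progression modulo $M$, of \emph{every} common difference. You flag the calibration of this cushion as the technical heart, and indeed the step fails as stated: when $\lfloor d/M^s\rfloor=O_k(1)$, the $s$-th digits of $x_0,\dots,x_{k-1}$ collapse onto $O_k(1)$ distinct residues, so they do not form a $k$-element configuration at all and no forbidden pattern is produced in $A_0$. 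One is then forced either to descend to lower digit positions (re-introducing carries and Beatty/wraparound bookkeeping) or to impose on $A_0$ a ``no overpacked short interval'' condition that is in tension with the Behrend/Rankin density $M^{1-\Theta_k((\log M)^{1/\ell-1})}$. The paper sidesteps all of this: its alphabet for every digit below the leading one lives in the central band $[2q/5,3q/5]$, so there are never any carries and the map from digit strings to integers is monotone; it then keys not off the scale of $d$ but off the \emph{highest} digit position $j_0$ at which two of the $x_i$ disagree, showing $\bigl||x_{i+1}-x_i|-|x_{i+1,j_0}-x_{i,j_0}|q^{j_0}\bigr|\le\tfrac25 q^{j_0}$ for all $i$, and closes the argument against Proposition~\ref{prop:characterization} (the consecutive gaps of an $\AP_k(\epsilon)$ agree to within a factor $1\pm5\epsilon$). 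No ``cushioned'' $A_0$ and no modular Beatty analysis are needed, which is precisely what removes the obstacle you identified.
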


The paper is organized as follows. In Section \ref{pf:waerden}, we present a proof of Theorem \ref{th:approximatewaerden}. The upper bound is an iterated blow-up construction, while the lower bound is given by an ad-hoc inductive coloring. We prove Theorem \ref{th:approximateszemeredi} in Section \ref{pf:szemeredi}. The lower bound uses the current lower bounds for $f(N,1,k)$, while the upper bound is given by an iterated blow-up construction combined with an averaging argument.

\section{Proof of Theorem \ref{th:approximatewaerden}}\label{pf:waerden}

\subsection{Upper bound} 

We start with the upper bound. Given $r\geq 1$ colors, we consider the following $r$-iterated blow-up of an $\AP_k$ given by the set of integers 
\begin{align*}
    B_r=\left\{b_0+tb_1+\ldots+t^{r-1}b_{r-1}:\: (b_0,\ldots,b_{r-1})\in \{0,1\ldots,k-1\}^r,\, t=\lceil k/\epsilon \rceil \right\}
\end{align*}
Note that $B_r$ is a set of size $|B_r|=k^r$ and $\diam(B_r)\leq (k-1)(1+t+\ldots+t^{r-1})<2(k-1)t^{r-1}$. It turns out that any $r$-coloring of $B_r$ contains a monochromatic $\AP_k(\epsilon)$. In particular, this implies that $W_{\epsilon}(k,r)\leq \diam(B_r)+1\leq 2k^r/\epsilon^{r-1}$.

\begin{proposition}\label{prop:upperboundwaerden}
Any $r$-coloring of $B_r$ has a monochromatic $\AP_k(\epsilon)$.
\end{proposition}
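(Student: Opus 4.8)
The plan is to prove Proposition~\ref{prop:upperboundwaerden} by induction on $r$, the number of colors. The base case $r=1$ is immediate: a $1$-coloring of $B_1=\{b_0: b_0\in\{0,\ldots,k-1\}\}=\{0,1,\ldots,k-1\}$ makes the whole set monochromatic, and $\{0,1,\ldots,k-1\}$ is a genuine $\AP_k$, hence an $\AP_k(\epsilon)$ (take $a=0$, $d=1$). For the inductive step, I would view $B_r$ as a disjoint union of $k$ translated copies of $B_{r-1}$, namely the ``blocks''
\begin{align*}
    B_r^{(j)} = j\cdot t^{r-1} + B_{r-1}, \qquad j\in\{0,1,\ldots,k-1\},
\end{align*}
where $B_{r-1}$ uses the same common difference parameter $t=\lceil k/\epsilon\rceil$. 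Each block has diameter $<2(k-1)t^{r-2}$, while consecutive blocks are separated (center-to-center) by exactly $t^{r-1}$, so the blocks are genuinely ``far apart'' relative to their internal scale: the gap $t^{r-1}$ dominates the block diameter by a factor roughly $t/(2k)\geq 1/(2\epsilon)$.

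**The dichotomy.**

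Fix an $r$-coloring $\chi$ of $B_r$. For each block $B_r^{(j)}$ there are two cases. Either some color class is ``small'' inside that block in a suitable sense, or not; more usefully, I would argue as follows. Suppose first that some block $B_r^{(j)}$ uses at most $r-1$ colors. Then $\chi$ restricted to that block is an $(r-1)$-coloring of a translate of $B_{r-1}$, so by the inductive hypothesis it contains a monochromatic $\AP_k(\epsilon)$, and we are done. Otherwise, every one of the $k$ blocks $B_r^{(0)},\ldots,B_r^{(k-1)}$ uses all $r$ colors. In particular each block contains a point of color, say, $1$; pick $x_j\in B_r^{(j)}$ with $\chi(x_j)=1$ for each $j$. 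I claim $\{x_0,x_1,\ldots,x_{k-1}\}$ is a monochromatic $\AP_k(\epsilon)$: it is monochromatic by construction, and taking $a=0$, $d=t^{r-1}$, we have $x_j\in B_r^{(j)} = j t^{r-1}+B_{r-1}$, so $|x_j - j t^{r-1}| \le \diam(B_{r-1}) < 2(k-1)t^{r-2} \le \epsilon\, t^{r-1}$, the last inequality because $t=\lceil k/\epsilon\rceil \ge k/\epsilon > 2(k-1)/\epsilon$ \dots wait, more carefully $2(k-1)t^{r-2} \le \epsilon t^{r-1}$ iff $t \ge 2(k-1)/\epsilon$, which holds since $t \ge k/\epsilon \cdot$ \dots actually $t=\lceil k/\epsilon\rceil$ gives $t \ge k/\epsilon$, and $k/\epsilon \ge 2(k-1)/\epsilon$ fails for large $k$; so I would instead use the cruder bound $\diam(B_{r-1}) < 2(k-1)t^{r-2} < 2kt^{r-2} \le 2\epsilon t^{r-1}$ only up to the factor $2$, and simply note that one can absorb the constant by using $d$ slightly larger than $t^{r-1}$, or redefine $t=\lceil 2k/\epsilon\rceil$; in the write-up I would just pick $t$ large enough (the paper's $t=\lceil k/\epsilon\rceil$ together with $\diam(B_{r-1}) \le (k-1)(1+\cdots+t^{r-2}) < t^{r-1}\cdot (k-1)/(t-1) \le \epsilon$ for $t>k$) makes $|x_j-jt^{r-1}|<\epsilon t^{r-1}$ hold, since $(k-1)/(t-1)<\epsilon$ when $t-1\ge (k-1)/\epsilon$.

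**Where the work is.**

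Assembling this, the induction closes: in every case we produce a monochromatic $\AP_k(\epsilon)$ inside $B_r$. The main thing to get right — and the only genuinely delicate point — is the quantitative bookkeeping showing that $\diam(B_{r-1})$ is strictly smaller than $\epsilon$ times the block separation $t^{r-1}$, so that the ``one point per block'' transversal really does satisfy Definition~\ref{def:ap_k(epsilon)} with $a=0$ and $d=t^{r-1}$. This is where the choice $t=\lceil k/\epsilon\rceil$ (rather than just $t$ comparable to $k$) is used: it guarantees $\diam(B_{r-1}) = (k-1)\sum_{i=0}^{r-2}t^i < (k-1)\,\frac{t^{r-1}}{t-1} \le \epsilon\, t^{r-1}$, using $t-1 \ge (k-1)/\epsilon$. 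Everything else is a clean two-case argument driven by the pigeonhole observation that a block either omits a color (and we recurse) or realizes every color (and color $1$ gives the transversal). Finally, the diameter computation $\diam(B_r) \le (k-1)(1+t+\cdots+t^{r-1}) < 2(k-1)t^{r-1} < 2k^r/\epsilon^{r-1}$ yields the stated bound $W_\epsilon(k,r) \le \diam(B_r)+1 \le 2k^r/\epsilon^{r-1}$.
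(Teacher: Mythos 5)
Your proof is correct and takes essentially the same route as the paper: induction on $r$, decomposition of $B_r$ into $k$ consecutive translates of $B_{r-1}$, and the dichotomy between a block missing a color (recurse) versus every block realizing every color (take a monochromatic transversal with $d=t^{r-1}$). The only cosmetic difference is that the paper centers the approximating progression at $a=\diam(B_{r-1})/2$ rather than $a=0$, which gives a bit more slack in the estimate $|x_i-(a+id)|\le\diam(B_{r-1})/2<\epsilon d$; your one-sided bound $\diam(B_{r-1})<\epsilon t^{r-1}$ also goes through with $t=\lceil k/\epsilon\rceil$, as you verify.
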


\begin{proof}
We prove the proposition by induction on the number of colors $r$. For $r=1$, one can see that $B_1=[k]$, which is an arithmetic progression of length $k$ and in particular a $\AP_k(\epsilon)$. Now suppose that any $(r-1)$-coloring of $B_{r-1}$ contains a monochromatic $\AP_k(\epsilon)$. Consider an $r$-coloring of $B_r$. Note that we can partition $B_r=\bigcup_{i=0}^{k-1} B_{r,i}$ where
\begin{align*}
    B_{r,i}=\left\{b_0+\ldots+t^{r-2}b_{r-2}+it^{r-1}:\: (b_0,\ldots,b_{r-2})\in \{0,1,\ldots,k-1\}^{r-1},\, t=\lceil k/\epsilon \rceil \right\}.
\end{align*}
That is, for every $0\leq i \leq k-1$, the set $B_{r,i}$ is a translation of $B_{r-1}$ by $it^{r-1}$. 

Consider a transversal $X=\{x_0,\ldots,x_{k-1}\}$ of $B_r=\bigcup_{i=0}^{k-1} B_{r,i}$ with $x_i \in B_{r,i}$ for every $0\leq i \leq k-1$. Let $a=\diam(B_{r-1})/2$ and $d=t^{r-1}$. Since $x_i \in B_{r,i}$ implies that~$it^{r-1}\leq x_i \leq~it^{r-1}+~\diam(B_{r-1})$, we obtain that
\begin{align*}
    |x_i-(a+id)|\leq \frac{\diam(B_{r-1})}{2} \leq \frac{k^{r-1}}{\epsilon^{r-2}} \leq \epsilon d
\end{align*}
and $X$ is an $\epsilon$-approximate $\AP_k(\epsilon)$. Therefore, if some color $c$ is present in each of the sets~$B_{r,i}$ for $0\leq i \leq r-1$, we could select $X$ to be a monochromatic $\AP_k(\epsilon)$. Consequently we may assume that there is no monochromatic transversal in $B_r$, which means that there exists an index $i$ such that $B_{r,i}$ is colored with at most $(r-1)$ colors. Since $B_{r,i}$ is just a translation of $B_{r-1}$, by induction hypothesis we conclude that there exists a monochromatic~$\AP_k(\epsilon)$ inside $B_{r,i}$.
\end{proof}

\subsection{Lower bound}

In order to construct a large set avoiding $\epsilon$-approximate $\AP_k(\epsilon)$ we need some preliminary results. Given a real number $D>0$, we define an \textbf{$(r-1,1;D)$-alternate labeling of $\RR$} to be an labeling $\chi: \RR \rightarrow \{-1,+1\}$ such that

\begin{align*}
\chi(x)=\begin{cases}
+1&, \quad \text{if } x\in \bigcup_{i\in\ZZ}\left(irD+mD,\left(i+\frac{r-1}{r}\right)rD+mD\right],\\
-1&, \quad \text{if } x\in\bigcup_{i\in \ZZ}\left(\left(i+\frac{r-1}{r}\right)rD+mD, (i+1)rD+mD\right],
\end{cases}
\end{align*}
for some $m \in \ZZ$. That is, $\chi$ is a periodic labeling of $\RR$ with period $rD$, where we partition $\RR$ into disjoint intervals of length $D$ and label them alternating between $r-1$ consecutive intervals of label $+1$ and one of label $-1$. The restriction of an $(r-1,1;D)$-alternate labeling to $\ZZ$ will be of great importance for us. The following lemma roughly characterizes the common difference of any large monochromatic approximate arithmetic progression in such a labeling.

\begin{lemma}\label{lem:water}
Let $D,\delta>0$, $m$ be a positive integer with $\delta\leq \frac{1}{2r(r+1)}$ and $\chi: \RR \rightarrow \{-1,+1\}$ be an $(r-1,1;D)$-alternate labeling of $\RR$. If there exist $a, d\in \RR$ and an integer $\ell$ such that
\begin{align*}
    d\not \in \bigcup_{i\in \ZZ}\bigcup_{q=1}^{r}\left(\left(\frac{i}{q}-\delta\right)rD, \left(\frac{i}{q}+\delta\right)rD\right),
\end{align*}
and that $B=\bigcup_{i=0}^{\ell-1}B(a+id,\delta r D)$ has a monochromatic transversal of label $+1$, then $\ell\leq 3r/\delta$.
\end{lemma}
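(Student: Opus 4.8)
The plan is to recast the statement as a question about a single rotation of the circle $\RR/\ZZ$. Write $\{x\}$ for the fractional part, $\|x\|$ for the distance from $x$ to the nearest integer, and set $\beta:=\{d/(rD)\}$. The non\-resonance hypothesis on $d$ says precisely that $\|q\beta\|\ge q\delta$ for every $1\le q\le r$; in particular $\beta\in[\delta,1-\delta]$, and after replacing $\beta$ by $1-\beta$ if necessary (which preserves $\|q\beta\|$ for all $q$ and only reflects the arc below, so changes nothing relevant) we may assume $\beta\le 1/2$. On the other hand, for $r\ge 2$ (the case $r=1$ is vacuous, since an $(0,1;D)$\-labeling has no $+1$\-point) a ball $B(a+id,\delta rD)$ has diameter $2\delta rD<D$, hence cannot span a $+1$\-block, which has length $(r-1)D\ge D$; therefore $B(a+id,\delta rD)$ meets no $+1$\-point exactly when it lies inside a single $-1$\-block, i.e. when $a+id$ lies in the erosion of that block by the ball, a closed interval of length $D(1-2\delta r)$, and these eroded blocks form a fixed residue\-translate of a closed interval of length $D(1-2\delta r)$ with period $rD$. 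Thus the hypothesis that $B$ has a monochromatic transversal of label $+1$ is equivalent to the assertion that the orbit $\{\,\{(a+id)/(rD)\}:0\le i<\ell\,\}$ avoids a fixed closed arc $I\subseteq\RR/\ZZ$ of length $\mu:=\tfrac1r-2\delta$. It suffices to bound $\ell$. Note that $\delta\le\frac1{2r(r+1)}$ gives $\mu\ge\frac1{r+1}$, and this is the only place that hypothesis is used.

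Next I would apply a Dirichlet\-type dichotomy to the $r+1$ points $0,\beta,2\beta,\dots,r\beta\pmod 1$. Either two of them lie at circular distance less than $\frac1{r+1}$, which produces an integer $q^\ast$ with $1\le q^\ast\le r$ and $\|q^\ast\beta\|<\frac1{r+1}\le\mu$; or the $r+1$ points are equally spaced, forcing $\beta=j/(r+1)$ for an integer $j$ (necessarily coprime to $r+1$, since otherwise $\|q\beta\|=0$ for some $q\le r$, contradicting the hypothesis). In this degenerate case the whole orbit is contained in a set of $r+1$ equally spaced points, whose gaps have length $\frac1{r+1}$; since $\mu\ge\frac1{r+1}$, the arc $I$ cannot fit inside a gap and so meets one of these points once the orbit has $r+1$ terms, whence $\ell\le r\le 3r/\delta$.

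In the main case the idea is to split $\{0,1,\dots,\ell-1\}$ into the $q^\ast$ residue classes modulo $q^\ast$ and note that along each class the orbit is an orbit of the rotation by $q^\ast\beta$. This rotation moves every point by the fixed amount $\|q^\ast\beta\|<\mu=|I|$ (in one direction around the circle), so it cannot ``jump over'' $I$; since the orbit never enters $I$, along each residue class it moves monotonically inside the single arc $I^c$, which has length $1-\mu<1$, in steps of size $\|q^\ast\beta\|\ge q^\ast\delta$. Hence each residue class contributes fewer than $\frac{1}{q^\ast\delta}+1$ indices, and summing over the $q^\ast$ classes yields $\ell\le q^\ast\big(\tfrac1{q^\ast\delta}+1\big)=\tfrac1\delta+q^\ast\le\tfrac1\delta+r\le\tfrac{3r}{\delta}$, as claimed.

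The routine but slightly delicate point is the translation in the first paragraph: one must check carefully, keeping track of the half\-open intervals in the definition of $\chi$ and of the arc boundaries, that ``$B$ has a $+1$\-transversal'' corresponds to avoidance of a closed arc of length exactly $\mu$, and that the union over $q\in\{1,\dots,r\}$ in the hypothesis on $d$ is exactly what encodes $\|q\beta\|\ge q\delta$ for all $q\le r$. The substantive step is the confinement argument of the third paragraph, but it is entirely elementary — it uses only that a rotation whose step is shorter than a forbidden arc cannot cross it, plus the trivial fact that an arc of length $<1$ contains a bounded number of equally spaced points — so I do not expect a real obstacle beyond assembling these two ingredients.
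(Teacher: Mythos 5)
Your proof is correct and its core is the same as the paper's: a pigeonhole argument produces some $q^\ast\in\{1,\dots,r\}$ for which the reduction of $q^\ast d$ modulo $rD$ is, at once, bounded below (via the nonresonance hypothesis) and at most $rD/(r+1)$, after which a confinement argument — a rotation by a step shorter than the forbidden window can never cross it — traps the orbit in a single period and bounds its length. Your recasting on the circle $\RR/\ZZ$, your separate treatment of the equispaced degenerate case $d/(rD)\equiv j/(r+1)$, and your summation over all $q^\ast$ residue classes of $\{0,\dots,\ell-1\}$ (the paper instead passes to the single subprogression $\{a,a+sd,\dots\}$ and rescales by $s$ at the end) are presentational differences only, though the residue-class bookkeeping does yield the slightly sharper intermediate bound $\ell<1/\delta+q^\ast$.
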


\begin{proof}
We may assume without loss of generality that $\chi$ is the following labeling of $\RR$:
\begin{align*}
    \chi(x)=\begin{cases}
+1&, \quad \text{if } x\in \bigcup_{i\in\ZZ}\left(irD,\left(i+\frac{r-1}{r}\right)rD\right],\\
-1&, \quad \text{if } x\in\bigcup_{i\in \ZZ}\left(\left(i+\frac{r-1}{r}\right)rD, (i+1)rD\right],
\end{cases}
\end{align*}
That is, we may assume that $m=0$ in the definition of an alternate labeling. Also, during the proof we shall write $\overline{x}$ to be the representative of $x$ modulo $rD$ in the interval $(0,rD]$, i.e., the number $0<\overline{x}\leq rD$ such that $x-\overline{x}=brD$ for some integer $b\in \ZZ$. 

We start by claiming that there exists $1\leq s \leq r$ such that
\begin{align}\label{eq:1}
    \overline{sd} \in \left[\delta rD, \frac{rD}{r+1}\right] \cup \left[\left(1-\frac{1}{r+1}\right)rD, (1-\delta)rD\right].
\end{align}
First note by our hypothesis that 
\begin{align*}
    d\notin \left(\left(\frac{i}{q}-\delta\right)rD, \left(\frac{i}{q}+\delta\right)rD\right) 
\end{align*}
for every $i \in \ZZ$ and $1\leq q \leq r$. Therefore,
\begin{align}\label{eq:2}
    qd \notin ((i-\delta)rD, (i+\delta)rD) \subseteq ((i-q\delta)rD, (i+q\delta)rD)
\end{align}
for every $i \in \ZZ$ and $1\leq q \leq r$.

Now consider the partition $(0,rD]=\bigcup_{j=0}^r\left(\frac{jrD}{r+1},\frac{(j+1)rD}{r+1}\right]$. If there exists $1\leq s \leq r$ such that~$\overline{sd}$ is in the two outer intervals above, i.e., in either $\left(0,\frac{rD}{r+1}\right]$ or $\left(\left(1-\frac{1}{r+1}\right)rD,rD\right]$, then by (\ref{eq:2}) we obtain that $s$ satisfies (\ref{eq:1}). Otherwise, assume that there is no $1\leq s \leq r$ with $\overline{sd}$ in the two outer intervals. Then by the pigeonhole principle there exist~$1\leq p<q \leq r$ and an index $j$ such that $\overline{pd}, \overline{qd}\in \left(\frac{jrD}{r+1},\frac{(j+1)rD}{r+1}\right]$. Consequently, we have that $\overline{qd}-\overline{pd}\in~\left(-\frac{rD}{r+1}, \frac{rD}{r+1}\right)$. By letting $s=q-p$ we obtain that 
\begin{align*}
    \overline{sd} \in \left(0,\frac{rD}{r+1}\right] \cup \left(\left(1-\frac{1}{r+1}\right)rD,rD\right],
\end{align*}
for $1\leq s \leq r$, which is a contradiction. Therefore, condition (\ref{eq:1}) is always satisfied for some $s$.

Let $1\leq s \leq r$ be the number satisfying (\ref{eq:1}) and consider the subset
\begin{align*}
    B'=\bigcup_{i=0}^{\ell'}B(a+isd,\delta r D) \subseteq  B,
\end{align*}
where $\ell'=\lfloor(\ell-1)/s \rfloor$.
That is, if we see $B$ as the arithmetic progression of intervals of length~$\delta rD$, size $\ell$ and common difference $d$, then $B'$ is a subarithmetic progression of $B$ with common difference $sd$. Since $B$ has a monochromatic transversal labeled $+1$, then $B'$ also has a monochromatic transversal labeled $+1$. Hence, because $\bigcup_{i\in \ZZ}\left(irD,(i+\frac{r-1}{r})rD\right]$ are the elements of label $+1$ in our $(r-1,1;D)$-alternate labeling, we have that 
\begin{align*}
    \{a,a+sd,\ldots,a+\ell' sd\} \subseteq \bigcup_{i \in \ZZ} \left((i-\delta)rD, \left(i+\frac{r-1}{r}+\delta\right) rD\right).
\end{align*}

Suppose that $\overline{sd} \in \left[\delta rD,\frac{1}{r+1}rD\right]$. Since the coloring $\chi$ is periodic modulo $rD$, we may assume without loss of generality that $sd \in \left[\delta rD,\frac{rD}{r+1}\right]$. We claim that there exists an integer $p$ such that $\{a,a+sd,\ldots,a+\ell' sd\} \subseteq  \left((p-\delta)rD, \left(p+\frac{r-1}{r}+\delta\right) rD\right)$. Suppose that this is not the case. Because $sd>0$ there exist integers $p<q$ and $0\leq i\leq \ell'-1$ such that~$a+isd \in \left((p-\delta)rD, \left(p+\frac{r-1}{r}+\delta\right) rD\right)$ and $a+(i+1)sd \in \left((q-\delta)rD, \left(q+\frac{r-1}{r}+\delta\right) rD\right)$. A computation shows that
\begin{align*}
    sd=a+(i+1)sd-(a+isd)> (q-\delta)rD-\left(p+\frac{r-1}{r}+\delta\right)rD\geq (1-2\delta r)D \geq \frac{rD}{r+1}
\end{align*}
for $\delta\leq \frac{1}{2r(r+1)}$, which contradicts our assumption on $sd$. 

Hence, there exists $p$ such that $a, a+\ell'sd \in \left((p-\delta)rD, \left(p+\frac{r-1}{r}+\delta\right) rD\right)$, which implies that
\begin{align*}
    \ell' sd= (a+\ell' sd)-a\leq \left(p+\frac{r-1}{r}+\delta\right) rD-(p-\delta)rD=(r-1)D+2\delta r D.
\end{align*}
Since $sd \geq \delta r D$, we obtain that
\begin{align*}
    \ell'sd \geq \left\lfloor\frac{\ell-1}{s}\right\rfloor\delta r D \geq \frac{\ell\delta r D}{2s} \geq \frac{\delta \ell D}{2}
\end{align*}
for $\ell>r\geq s$. The last two computations combined with the fact that $\delta\leq \frac{1}{2r(r+1)} \leq \frac{1}{4}$ gives us that
\begin{align*}
    \ell\leq \frac{2(r-1)D+4\delta rD}{\delta  D}\leq \frac{2(r-1)}{\delta}+4r\leq\left(\frac{2}{\delta}+4\right)r \leq \frac{3r}{\delta}
\end{align*}

Now assume that $\overline{sd} \in  \left[\left(1-\frac{1}{r+1}\right)rD,(1-\delta)rD\right]$. By the periodicity of $\chi$, we may assume without loss of generality that $sd \in [-\frac{rD}{r+1},-\delta rD]$. By rewriting $\{a,a+sd,\ldots,a+\ell' sd\}$ as~$\{a',a'+sd',\ldots,a'+\ell' sd'\}$ with $a'=a+\ell' sd$ and $d'=-d$, we are back to the previous case and again $\ell\leq 3r/\delta$.
\end{proof}

Although it is convenient to prove Lemma $\ref{lem:water}$ using an alternate labeling of $\RR$, the lower bound construction will use alternate labelings of set of integers. With this in mind, we give the following companion definition. 

Given positive integers $D$, $r$ and $t$, an \textbf{$(r-1,1;D)$-alternate labeling} of the set $[rtD]$ is a labeling $\chi':[rtD] \rightarrow \{-1,+1\}$ such that $\chi'(x)=\chi(x)$, where~$\chi$ is an $(r-1,1;D)$-alternate labeling of $\RR$. In other words, an alternate labeling of a set of integers is just the restriction of an alternate labeling of $\RR$ to the set. Note that by this definition, there exists~$r$ distinct~$(r-1,1;D)$-alternate labelings of $[rtD]$. A \textbf{$D$-block} of $[rtD]$ is a block of~$D$ consecutive integers of the form $[iD+1,(i+1)D]$. One can note that the $D$-blocks form a partition of $[rtD]$ and each $D$-block is monochromatic in an $(r-1,1;D)$-alternate labeling of $[rtD]$.  

Finally, note that given an alternate labeling $\chi'$ of a set $[rtD]$ we can extend back to a an alternate labeling of $(0,rtD]$ by labeling the entire interval $(iD,(i+1)D]$ with the same label as the $D$-block of integers $[iD+1,(i+1)D]$. Since the labeling is periodic, it is now easy to extend back to a labeling $\chi$ of $\RR$. 

The next result is a consequence of the proof of Lemma \ref{lem:water}.

\begin{proposition}\label{prop:simplelowerbound}
Let $D$, $r$, $t$ and $\ell$ be positive integers with $\ell\geq t(r+1)+2$ and $0<\epsilon<1/2r$ be a real number. If $[rtD]$ is colored by an $(r-1, 1; D)$-alternate labeling and $X\subseteq [rtD]$ is a monochromatic $\AP_{\ell}(\epsilon)$ of label $+1$, then there exists $0\leq i \leq rt-1$ such that the $D$-block~$[iD+1,(i+1)D]$ satisfies~$|X\cap [iD+1,(i+1)D]|\geq \ell/(r-1)$.
\end{proposition}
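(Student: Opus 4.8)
The plan is to show that, under the hypotheses $\ell\geq t(r+1)+2$ and $\epsilon<1/2r$, the common difference of $X$ is forced to be strictly smaller than $D$; then an elementary argument about the block structure of the alternate labeling concentrates $X$ into a single run of $+1$-blocks, and pigeonhole finishes. Throughout we may assume $r\geq 2$, since for $r=1$ no integer carries the label $+1$ and the assertion is vacuous.

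First I would fix witnesses $a\in\RR$ and $d>0$ with $|x_i-(a+id)|<\epsilon d$ for all $i$, where $X=\{x_0,\ldots,x_{\ell-1}\}$. Since $\epsilon<1/2$, consecutive points satisfy $x_{i+1}-x_i\in((1-2\epsilon)d,(1+2\epsilon)d)$, so $X$ is strictly increasing (hence $|X|=\ell$) and lies in $\{1,\ldots,rtD\}$; in particular $(\ell-1-2\epsilon)d<x_{\ell-1}-x_0\leq rtD-1$. A short estimate, namely $t+1>2\epsilon(rt+1)$, which is exactly what $\epsilon<1/2r$ and $r\geq 2$ yield, upgrades this to $(1+2\epsilon)rt<\ell-1-2\epsilon$, whence $(1+2\epsilon)d<D$. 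Consequently any two consecutive points of $X$ differ by strictly less than $D$.

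Next I would use the structure of the $(r-1,1;D)$-alternate labeling, which is periodic with period $rD$: in each group of $r$ consecutive $D$-blocks the first $r-1$ carry label $+1$ and the last carries $-1$. Hence the $+1$-blocks of $[rtD]$ split into maximal runs of at most $r-1$ consecutive $D$-blocks, and any such run that does not end at the right endpoint of $[rtD]$ is immediately followed by a single $-1$-block, which has length $D$. Suppose, for contradiction, that $x_i$ and $x_{i+1}$ lie in distinct runs; since $x_{i+1}>x_i$, the run of $x_{i+1}$ is the later one, so the run of $x_i$ is followed by a $-1$-block $(B,B+D]$, where $B$ is the largest integer of the run of $x_i$. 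Then $x_i\leq B$, while $x_{i+1}$, having label $+1$, must lie beyond this $-1$-block, so $x_{i+1}\geq B+D+1$; thus $x_{i+1}-x_i\geq D+1$, contradicting the bound from the previous step. Therefore all of $x_0,\ldots,x_{\ell-1}$ lie in a single maximal run of at most $r-1$ consecutive $D$-blocks, and since $|X|=\ell$ the pigeonhole principle produces one of these blocks $[iD+1,(i+1)D]$ with $|X\cap[iD+1,(i+1)D]|\geq\ell/(r-1)$.

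The argument is almost entirely structural; the only quantitative point is the inequality $(1+2\epsilon)d<D$, so the one mildly delicate step is checking that the constants line up: one needs $\ell-1-2\epsilon>(1+2\epsilon)rt$, equivalently $t+1>2\epsilon(rt+1)$, which holds because $2\epsilon(rt+1)<(rt+1)/r=t+1/r<t+1$ for $r\geq 2$. Everything else is immediate from the definition of $\AP_\ell(\epsilon)$ and of the alternate labeling.
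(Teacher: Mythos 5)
Your proof is correct and follows essentially the same path as the paper's: use the width of $[rtD]$ and the lower bound $\ell\geq t(r+1)+2$ to force $(1+2\epsilon)d<D$, observe that jumping between maximal runs of $+1$-blocks would require a step of at least $D$, conclude that all of $X$ sits in a run of at most $r-1$ consecutive $D$-blocks, and finish by pigeonhole. The only cosmetic difference is that you bound $d$ via the explicit inequality $t+1>2\epsilon(rt+1)$ whereas the paper first isolates $d\leq rD/(r+1)$ and then multiplies by $1+2\epsilon$; these are the same estimate.
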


\begin{proof}
Write $X=\{x_0,\ldots,x_{\ell-1}\}$. Since $X$ is an $\AP_{\ell}(\epsilon)$, there exists $a\in \RR$, $d>0$ such that~$|x_i-(a+id)|<\epsilon d$. Therefore, a computation shows that
\begin{align*}
    rtD>|x_{\ell-1}-x_0|\geq a+(\ell-1)d-a-2\epsilon d=(\ell-1-2\epsilon)d,
\end{align*}
which implies that 
\begin{align}\label{eq:3}
    d\leq \frac{rtD}{\ell-2} \leq \frac{rD}{r+1}
\end{align}
for $\ell\geq t(r+1)+2$. 

Similarly as in the proof of Lemma \ref{lem:water}, we will show that all the elements of $X$ are inside an interval of $(r-1)$ consecutive $D$-blocks of label $+1$. 

Suppose that this was not the case. Since non-consecutive $D$-blocks of label $+1$ are at a distance of at least $D$ elements, then there exists $x_i$ and $x_{i+1}$ such that $|x_{i+1}-x_i|\geq D$. However, in view of $\epsilon<1/2r$ and (\ref{eq:3}), we obtain
\begin{align*}
    |x_{i+1}-x_i|\leq |x_{i+1}-(a+(i+1)d)|+|a+(i+1)d-(a+id)|+|x_{i}-(a+id)|\leq (1+2\epsilon)d < D,
\end{align*}
which is a contradiction. The result now follows by an application of the pigeonhole principle.
\end{proof}

Note that Proposition \ref{prop:simplelowerbound} already gives us a lower bound for the case $r=2$. Indeed, we will prove that an $(1,1;k-1)$-alternate labeling of $\left[\frac{2(k-1)(k-2)}{3}\right]$\footnote{Strictly speaking we should use the set $\left[2\left\lfloor\frac{k-2}{3} \right\rfloor(k-1)\right]$, since $\frac{k-2}{3}$ is not necessarily an integer. However, during our exposition we will not bother with this type of detail since it has no significant effect on arguments or results} does not contain a monochromatic~$\AP_k(\epsilon)$ for $\epsilon<1/4$ and sufficiently large $k$.

Suppose that this is not the case. Since an $(1,1;k-1)$-alternate labeling is symmetric, we may assume that there is a monochromatic $\AP_k(\epsilon)$ of label $+1$. Applying Proposition~\ref{prop:simplelowerbound} with $r=2$, $t=(k-2)/3$, $D=k-1$ and $\ell=k$ gives us that there exists a $(k-1)$-block of the form $[i(k-1)+1,(i+1)(k-1)]$ such that $|X\cap [i(k-1)+1,(i+1)(k-1)]|\geq k$, which contradicts the size of the block.

Unfortunately, the argument above does not give a lower bound depending on $\epsilon$. To achieve such a bound we will need to refine the previous construction, but first we need one more preliminary result. 

The second Chebyshev function $\psi(x)$ is defined to be the logarithm of the least common multiple of all positive integers less or equal than $x$. The following bound on $\psi(x)$ will be useful for us.

\begin{theorem}[\cite{RSch75}, Theorem 7]\label{th:psi}
If $x\geq 10^8$, then $|\psi(x)-x|<cx/\log x$ for some positive constant $c$.
\end{theorem}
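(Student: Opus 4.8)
Theorem \ref{th:psi} is a fully effective, quantitative form of the prime number theorem, so the plan is to recall the classical analytic proof and pinpoint exactly where effectivity enters. First I would invoke the truncated explicit formula connecting $\psi$ to the Riemann zeta function: for $x\geq 2$ not a prime power and $2\leq T\leq x$,
\[
\psi(x) = x - \sum_{|\Im\rho|\leq T}\frac{x^{\rho}}{\rho} - \log(2\pi) - \tfrac{1}{2}\log\!\left(1-x^{-2}\right) + O\!\left(\frac{x\log^2 x}{T}\right),
\]
where $\rho$ ranges over the nontrivial zeros of $\zeta$. This is obtained by applying Perron's formula to the Dirichlet series $-\zeta'(s)/\zeta(s)=\sum_{n\geq 1}\Lambda(n)n^{-s}$ and shifting the line of integration to the left, collecting the residue $x$ from the simple pole of $\zeta$ at $s=1$ and the residues $x^{\rho}/\rho$ from the zeros.

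Next I would feed in a classical zero-free region: there is an absolute $c_0>0$ with $\zeta(\sigma+it)\neq 0$ whenever $\sigma\geq 1-c_0/\log(|t|+2)$, together with the standard estimate $O(\log^2 T)$ for the number of zeros with $|\Im\rho|\leq T$ and bounds on $\zeta'/\zeta$ adjacent to that region. Bounding the zero sum and optimizing $T$ (roughly $\log T\asymp\sqrt{\log x}$) gives $|\psi(x)-x| = O\!\left(x\exp(-c_1\sqrt{\log x})\right)$ for an absolute $c_1>0$. Since $\exp(-c_1\sqrt{\log x})$ is eventually far smaller than $1/\log x$, this already yields $|\psi(x)-x|<c\,x/\log x$ for all sufficiently large $x$; it falls short of Theorem \ref{th:psi} only in that the threshold and the value of $c$ are left unspecified.

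The real content of the cited theorem — and what I expect to be the main obstacle — is making every ingredient explicit: an explicit admissible $c_0$ in the zero-free region, explicit zero-density and $\zeta'/\zeta$ estimates, explicit constants in the truncated Perron formula, and the numerical check that the resulting bound is genuinely below $x/\log x$ once $x\geq 10^8$ (the verification of the Riemann hypothesis up to a finite height is also used to handle low-lying zeros). This is precisely the delicate bookkeeping carried out by Rosser and Schoenfeld, so in the present paper we simply quote their Theorem 7 rather than reproduce the computation.
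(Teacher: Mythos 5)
The paper does not prove this statement; it is quoted as a black box from Rosser and Schoenfeld, exactly as you say in your final paragraph. Your sketch of the underlying argument — truncated explicit formula via Perron's formula, a zero-free region of de la Vall\'ee Poussin type, zero-counting and $\zeta'/\zeta$ bounds, optimization of $T$ to get $O(x\exp(-c_1\sqrt{\log x}))$, and then the observation that this is eventually smaller than $x/\log x$ — is an accurate account of the standard proof, and your diagnosis that the genuine difficulty lies in making all constants explicit (including verification of the Riemann hypothesis up to a finite height to control low zeros) is precisely what \cite{RSch75} do. So your proposal is consistent with the paper: both treat the theorem as an imported result, and your outline correctly identifies the ideas behind it and why they are not reproduced here.
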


In particular, Theorem \ref{th:psi} asserts that for sufficiently large $n$ we have
\begin{align}\label{eq:4}
    \lcm(1,\ldots,n)=e^{n+O(n/\log n)}.
\end{align}
We are now ready to prove the lower bound of Theorem \ref{th:approximatewaerden}.

\begin{theorem}\label{th:lowerboundwaerden}
Let $r\geq 1$. There exists a positive constant $\epsilon_0$ and a real number $c_r$ depending on $r$ such that the following holds. If $0<\epsilon\leq \epsilon_0$ and $k\geq 2^rr!\epsilon^{-1}\log^r(1/5\epsilon)$ is a integer, then there exist an integer~$N:=N(\epsilon,k,r)$ satisfying 
\begin{align*}
    N\geq c_r\frac{k^r}{\epsilon^{r-1} \log(1/\epsilon)^{\binom{r+1}{2}-1}},
\end{align*}
so that $[N]$ admits an $r$-coloring without monochromatic~$\AP_k(\epsilon)$.
\end{theorem}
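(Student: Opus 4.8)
The plan is to construct the desired $r$-coloring of $[N]$ by induction on $r$, using the one-dimensional building blocks supplied by Proposition~\ref{prop:simplelowerbound} at each level. For $r=1$ one simply takes $N=k-1$ and the trivial (single-color) coloring of $[k-1]$, which contains no $\AP_k$ at all, let alone an $\AP_k(\epsilon)$. For the inductive step, suppose we already have an $(r-1)$-coloring $\chi_{r-1}$ of $[M]$ with no monochromatic $\AP_k(\epsilon)$, where $M$ is the quantity predicted by the theorem with $r-1$ in place of $r$. We build an $r$-coloring of a set of size $N \approx \mathrm{const}\cdot D\cdot M$ for a suitable scale $D$, by partitioning $[N]$ into $rt$ consecutive $D$-blocks, coloring the blocks according to an $(r-1,1;D)$-alternate labeling $\chi'$ (this uses up one ``new'' color, say $-1$, for the rare blocks and lets the $+1$-blocks carry a recursively scaled copy of $\chi_{r-1}$), and inside each $+1$-block placing a scaled-down copy of the inductive coloring of $[M]$. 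The parameters should be chosen as $D \asymp M$ (so that each block comfortably holds the inductive structure), $t \asymp \epsilon^{-1}$, and $\ell = k$, subject to the hypothesis $k \ge t(r+1)+2$ of Proposition~\ref{prop:simplelowerbound}, which forces $t \lesssim k\epsilon$ — consistent with the lower bound $k \ge 2^r r!\,\epsilon^{-1}\log^r(1/5\epsilon)$ in the statement.

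The verification that this coloring has no monochromatic $\AP_k(\epsilon)$ splits according to which color class $X=\{x_0,\dots,x_{k-1}\}$ would-be monochromatic progression lies in. If $X$ has label $+1$ in the alternate labeling, then two things must happen simultaneously: by Proposition~\ref{prop:simplelowerbound} the progression is squeezed into the $(r-1)$ consecutive $+1$-blocks, and in fact (by the pigeonhole conclusion of that proposition, iterated or sharpened) a $\ge k/(r-1)$ fraction of $X$ must land in a single $D$-block; but then within that block $X$ restricted is itself an $\AP_{k'}(\epsilon)$ with $k' \ge k/(r-1)$ of some color among the first $r-1$ colors — and here we must arrange that the recursive scaling keeps the approximation parameter at most $\epsilon$ and the length still $\ge k$, contradicting the induction hypothesis. (If the pigeonhole gives only $k/(r-1)$ rather than $k$, one compensates by starting the whole induction from a slightly larger target length; this is the role of the factor $2^r r!$ and the iterated logarithms.) If instead $X$ has label $-1$, it lies inside the sparse $-1$-blocks, which are single $D$-blocks separated by gaps of size $(r-1)D$, so the same distance computation as in Proposition~\ref{prop:simplelowerbound} confines $X$ to one $D$-block, again reducing to a shorter monochromatic $\AP(\epsilon)$ handled by induction on the coloring used inside that block. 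The only genuinely delicate point is the hypothesis of Proposition~\ref{prop:simplelowerbound} that the common difference $d$ avoids the forbidden neighborhoods $\big(\big(\tfrac{i}{q}-\delta\big)rD,\big(\tfrac{i}{q}+\delta\big)rD\big)$; this is where Theorem~\ref{th:psi} enters — choosing $D$ to be (a constant times) $\lcm(1,\dots,t)$ makes every rational $i/q$ with $q \le t$ correspond to an integer multiple of $D/q$, so after rescaling by $\lcm$ the bad set has controlled density and, crucially, the multiplicative overhead introduced is only $e^{t+O(t/\log t)}$, i.e. $\epsilon^{-O(1)}$ up to the log powers, which is exactly what produces the exponent $\binom{r+1}{2}-1$ on $\log(1/\epsilon)$.

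Assembling the recursion $N_r \ge c\, D_r\, N_{r-1}$ with $D_r \asymp N_{r-1}$ and $t \asymp k\epsilon$, one unwinds to $N_r \gtrsim k^r / \epsilon^{r-1}$, with the correction factors from each application of Theorem~\ref{th:psi} and each pigeonhole loss accumulating to the $\log(1/\epsilon)^{\binom{r+1}{2}-1}$ denominator and the constant $c_r$. The main obstacle I anticipate is bookkeeping the interplay between three constraints at once: (i) the length must stay $\ge k$ (or $\ge k/(r-1)$ after pigeonhole) through all $r$ levels of recursion, which is why the hypothesis on $k$ carries the factor $2^r r!\log^r(1/5\epsilon)$; (ii) the approximation parameter must not blow up under the affine rescalings nesting the blocks — one checks that $\epsilon$ is preserved because each block is an exact homothetic copy; and (iii) the common-difference-avoidance hypothesis must hold at every level simultaneously, forcing $D$ at level $r$ to be divisible by $\lcm(1,\dots,t)$, and it is the careful choice $\epsilon_0$ small that guarantees all of Theorem~\ref{th:psi}'s hypotheses ($x \ge 10^8$) and the inequalities $\delta \le \frac{1}{2r(r+1)}$, $\epsilon < \frac{1}{2r}$ of Proposition~\ref{prop:simplelowerbound} and Lemma~\ref{lem:water} are met for all relevant scales.
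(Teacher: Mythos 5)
Your broad blueprint matches the paper's proof in its coarsest strokes — induction on $r$, alternate labelings, nested $D$-blocks carrying the inductive $(r-1)$-coloring, and Lemma~\ref{lem:water} combined with Proposition~\ref{prop:simplelowerbound} to constrain the alleged monochromatic $\AP_k(\epsilon)$. But there is a real gap at the single most delicate point, and your proposed repair does not close it.

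The hypothesis of Lemma~\ref{lem:water} (not Proposition~\ref{prop:simplelowerbound}, which you have conflated) requires the common difference $d$ to avoid the neighborhoods of the rationals $\tfrac{i}{q}rD$, $1\le q\le r$. This is a condition on $d$, and $d$ is chosen \emph{adversarially}: it is whatever the common difference of the hypothetical monochromatic $\AP_k(\epsilon)$ happens to be. For any single fixed scale $D$, the forbidden set is a nonempty union of intervals with positive density; the adversary's $d$ can simply land inside it, and then Lemma~\ref{lem:water} says nothing. Making $D$ a multiple of $\lcm(1,\ldots,t)$ does not change this — it just rescales where the bad intervals sit, it does not empty them. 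Your remark that the bad set ``has controlled density'' is true but irrelevant, because density control is no defence against a single chosen $d$. (Also, with $t\asymp\epsilon^{-1}$ as you take, $\lcm(1,\dots,t)\asymp e^{t}\asymp e^{1/\epsilon}$ is exponentially large, which would destroy the bound, not produce a $\polylog(1/\epsilon)$ correction.)

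The paper's construction avoids this by using \emph{many scales simultaneously}: the set $[N_1]$ is partitioned into sub-intervals $Y_{i,j}$ whose block-sizes $D_j=\tfrac{s-j+1}{s}N_0$ vary over $1\le j\le s/2$ with $s\asymp\log(1/\epsilon)$. Proposition~\ref{prop:dunhappy} then shows that for every not-too-small $d$ there is \emph{some} index $j_0$ at which $d$ avoids the forbidden set for $D_{j_0}$; this is exactly where Theorem~\ref{th:psi} enters, via $\lcm(s/2+1,\dots,s)=\lcm(1,\dots,s)\approx e^{0.9s}$, which bounds how far $d$ can travel before some scale catches it. The remaining work (Proposition~\ref{prop:largeprop}) pins down the location of $X$ to $O(s)$ of the intervals $Y_{i,j}$, after which Proposition~\ref{prop:simplelowerbound} finds a single $D_{j_1}$-block with $\ge k/(rs)$ points of $X$ and the induction closes. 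Without the multi-scale trick of Proposition~\ref{prop:dunhappy} your argument cannot get off the ground, because there is no legitimate way to force the unknown $d$ into the good region for a single scale.
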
 

\begin{proof}
The proof is by induction on the number of colors $r$. For $r=1$, the result clearly holds for $N(\epsilon,k,1)=k-1$ since there is no $\AP_k(\epsilon)$, or even $\AP_k$, on $(k-1)$ terms. Now suppose that for any $\epsilon$ and $k$ such that $0<\epsilon\leq \epsilon_0$ and $k\geq 2^{r-1}(r-1)!\epsilon^{-1}\log^{r-1}(1/5\epsilon)$, there exists~$N(\epsilon,k,r-1)$ and a $(r-1)$-coloring of $[N(\epsilon,k,r-1)]$ satisfying the conclusion of the statement. We want to find an integer $N_1$ so that $[N_1]$ has a $r$-coloring without monochromatic $\AP_k(\epsilon)$.

To do that we start with some choice of variables. Let
\begin{align}\label{eq:5}
    N_0=N\left(\epsilon, \frac{k}{rs}, r-1\right), \quad s=\frac{1}{0.9}\log(1/5\epsilon), \quad w=\frac{e^{0.9s}}{s(r-1)!}, \quad t=\frac{k}{2rs}, \quad D_j=\frac{s-j+1}{s}N_0
\end{align}
be integers for $1\leq j \leq s/2$. Note that although $s, w, t$ and $\{D_j\}_{1\leq j\leq s/2}$ might not be integers, we prefer to write in this way, since it simplifies the exposition and has no significant effect on the arguments. Moreover, the integer $N_0$ always exists since by hypothesis
\begin{align*}
\frac{k}{rs}\geq \frac{2^rr!\epsilon^{-1}\log^r (1/5\epsilon)}{rs}\geq 2^{r-1}(r-1)!\epsilon^{-1}\log^{r-1}(1/5\epsilon).
\end{align*}

Let $N_1=rwt(D_1+\ldots+D_{s/2})$. We are going to define a coloring $\phi: [N_1]\rightarrow [r]$ not admitting monochromatic $\AP_k(\epsilon)$. To this end we partition $[N_1]$ into consecutive intervals following the four steps below:

\begin{itemize}
    \item First we partition $[N_1]$ into $[N_1]=Y_1\cup \ldots \cup Y_w$, where $Y_i$ are consecutive intervals and $|Y_i|=rt(D_1+\ldots+D_{s/2})$ for every $i=1,\ldots,w$.
    \item Each $Y_i$ is partitioned into $Y_i=Y_{i,1}\cup \ldots \cup Y_{i,s/2}$, where $Y_{i,j}$'s are consecutive intervals and $|Y_{i,j}|=rtD_j$ for every $j=1,\ldots,s/2$.
    \item Each $Y_{i,j}$ is partitioned into $Y_{i,j}=Z_{1}^{i,j}\cup\ldots \cup Z_{t}^{i,j}$, where $Z_{u}^{i,j}$'s are consecutive intervals and $|Z_{u}^{i,j}|=rD_j$ for every $u=1,\ldots,t$.
    \item Each $Z_u^{i,j}$ is partitioned into $Z_u^{i,j}=Z_{u,1}^{i,j}\cup \ldots \cup Z_{u,r}^{i,j}$, where $Z_{u,v}^{i,j}$'s are consecutive intervals and $|Z_{u,v}^{i,j}|=D_j$ for every $v=1,\ldots,r$.
\end{itemize}


More explicitly, we define
\begin{align*}
    \alpha_i&=(i-1)rt(D_1+\ldots+D_{s/2}), \quad i\in [w]\\
    \beta_{i,1}&=\alpha_i, \quad i\in[w]\\
    \beta_{i,j}&=rt(D_1+\ldots+D_{j-1})+\alpha_i, \quad (i,j)\in[w]\times[2,s/2]\\
    \gamma_{i,j,u}&=(u-1)rD_j+\beta_{i,j}, \quad (i,j,u)\in [w]\times[s/2]\times [t]\\
    \sigma_{i,j,u,v}&=(v-1)D_j+\gamma_{i,j,u}\quad (i,j,u,w)\in [w]\times[s/2]\times [t] \times [r].
\end{align*}
Therefore, our intervals can be written as
\begin{align*}
    Y_i&=[\alpha_i+1,\alpha_i+rt(D_1+\ldots+D_{s/2})], \quad i\in [w]\\
    Y_{i,j}&=[\beta_{i,j}+1,\beta_{i,j}+rtD_j], \quad (i,j)\in[w]\times [s/2]\\
    Z_{u}^{i,j}&=[\gamma_{i,j,u}+1,\gamma_{i,j,u}+rD_j], \quad (i,j,u)\in [w]\times[s/2]\times[t]\\
    Z_{u,v}^{i,j}&=[\sigma_{i,j,u,v}+1,\sigma_{i,j,u,v}+D_j], \quad (i,j,u,v)\in [w]\times[s/2]\times[t]\times [r]
\end{align*}

Finally, we describe the coloring $\phi:[N_1]\rightarrow [r]$ on the intervals $Z_{u,v}^{i,j}$. By induction hypothesis, given any set~$C$ of~$r-1$ colors there exists a coloring $\phi_C: [N_0]\rightarrow C$ with no monochromatic $\AP_{k/rs}(\epsilon)$. Fix~$Z_{u,v}^{i,j}$ with $(i,j,u,v)\in [w]\times [s/2] \times [t] \times [r]$. We color $Z_{u,v}^{i,j}$ by the same coloring as the first~$D_j$ elements of $[N_0]$ when $[N_0]$ is colored by $\phi_{[r]\setminus \{v\}}$. That is, the coloring $\phi$ restricted to $Z_{u,v}^{i,j}$ only uses $r-1$ colors and does not contain a monochromatic~$\AP_{k/rs}(\epsilon)$.

To prove that the coloring $\phi$ is free of $\AP_k(\epsilon)$ we are going to show that there is no~$a \in \RR$ and~$d>0$ such that $\bigcup_{i=0}^{k-1}B(a+id, \epsilon d)$ has a monochromatic transversal in $[N_1]$. Suppose the opposite and assume that there exists $a$ and $d$ such that $\bigcup_{i=0}^{k-1}B(a+id, \epsilon d)$ has a monochromatic transversal $X=\{x_0,\ldots,x_{k-1}\} \subseteq [N_1]$ of color $c\in [r]$. Since all the balls have radius $\epsilon d$, we obtain that $\{a,a+d,\ldots,a+(k-1)d\}\subseteq (1-\epsilon d, N_1+\epsilon d)$, which gives that~$(k-1)d\leq (N_1-1)+2\epsilon d$. By (\ref{eq:5}) and by the fact that $\epsilon\leq \epsilon_0$ we have that
\begin{align}\label{eq:6}
 d\leq \frac{N_1-1}{k-1-2\epsilon}\leq \frac{2N_1}{k}=\frac{2rwt(D_1+\ldots+D_{s/2})}{k}=\frac{w N_0}{s^2}\left(s+\ldots+\left(\frac{s}{2}+1\right)\right)\leq \frac{w N_0}{2},
\end{align}
for sufficiently small $\epsilon_0$.

For a fixed $Y_{i,j}=\bigcup_{u=1}^t\bigcup_{v=1}^rZ_{u,v}^{i,j}$ we define an auxiliary labeling $\chi_{i,j}:Y_{i,j} \rightarrow \{-1,+1\}$ of $Y_{i,j}$ such that every~$D_j$-block $Z_{u,v}^{i,j}$ is monochromatic and
\begin{align*}
\chi_{i,j}(Z_{u,v}^{i,j})=\begin{cases}
+1&, \quad \text{if } v\neq c,\\
-1&, \quad \text{if } v=c.
\end{cases}.
\end{align*}
In other words, every element of a $D_j$-block $Z_{u,v}^{i,j}$ is of label $-1$ if the coloring $\phi$ restricted to $Z_{u,v}^{i,j}$ has the same coloring of the first $D_j$ elements of $\phi_C:[N_0]\rightarrow C$, where $C=[r]\setminus\{c\}$, i.e., the set of colors missing the color $c$. Otherwise, we label all the elements in $Z_{u,v}^{i,j}$ by $+1$. It is not difficult to check that $\chi_{i,j}$ is an~$(r-1,1;D_j)$-alternate labeling of $Y_{i,j}$. Moreover, since $X$ is monochromatic of color~$c$ and $Z_{u,c}^{i,j}$ is colored by $\phi_{[r]\setminus\{c\}}$, we obtain that $X\cap Z_{u,c}^{i,j}=\emptyset$. This implies that every element of $X\cap Y_{i,j}$ is labeled $+1$. Finally, in order to apply Lemma \ref{lem:water}, we extend the labeling $\chi_{i,j}$ to the set of real numbers~$(\beta_{i,j},\beta_{i,j}+rtD_j]$ by labeling the entire interval $(\sigma_{i,j,u,v},\sigma_{i,j,u,v}+D_j]$ by color $\chi_{i,j}(Z_{u,v}^{i,j})$ for every ${u,v}\in [t]\times[r]$.

The main idea of the proof is based on the fact that for $d$ not too small, there exists an index $j_0$ such that~$d$ is far from certain fractions involving $D_{j_0}$. We will then imply by Lemma \ref{lem:water} that the number of elements of $X$ in $Y_{i,j_0}$ is ``small". It turns out that this fact is enough to restrict the entire location of~$X$ to just a few $Y_{i,j}$'s. Then by the pigeonhole principle and Proposition \ref{prop:simplelowerbound} we can show that there exists a $D_j$-block $Z_{u,v}^{i,j}$ with large intersection with $X$, which contradicts the inductive coloring of $Z_{u,v}^{i,j}$. 

The next proposition elaborates more on the existence of such a $j_0$.

\begin{proposition}\label{prop:dunhappy}
If $d>\frac{N_0}{s(r-1)!}$, then there exists index $1\leq j_0\leq s/2$ such that
\begin{align*}
    \left|d-\frac{mD_{j_0}}{(r-1)!}\right|\geq \frac{N_0}{2s(r-1)!}
\end{align*}
for every $m \in \ZZ$.
\end{proposition}

\begin{proof}
Let $M_0=\frac{N_0}{s(r-1)!}$. Note that by (\ref{eq:5}) we can write
\begin{align*}
    \frac{D_j}{(r-1)!}=(s-j+1)\frac{N_0}{s(r-1)!}=(s-j+1)M_0,
\end{align*}
for every $1\leq j \leq s/2$. Therefore, every number of the form $\frac{mD_j}{(r-1)!}$ for $m\in \ZZ^+$ and $1\leq j\leq s/2$ is a multiple of $M_0$. Moreover, the least non-zero common term among the sequences $\left\{\frac{mD_j}{(r-1)!}\right\}_{m\in \ZZ^+}$ for~$1\leq j \leq s/2$, i.e.,
\begin{align*}
    \min\bigcap_{1\leq j \leq s/2}\left\{\frac{mD_j}{(r-1)!}:\: m\in \ZZ^+\right\}=\min\bigcap_{1\leq j \leq s/2}\left\{m(s-j+1)M_0:\: m\in \ZZ^+\right\}
\end{align*}
is equal to $LM_0$, where $L=\lcm(s/2+1,\ldots,s)$. 

Since every number in $\{1,\ldots,s/2\}$ has a nontrivial multiple inside $\{s/2+1,\ldots,s\}$ we obtain by (\ref{eq:4}) that
\begin{align*}
    L=\lcm(s/2+1,\ldots, s)=\lcm(1,\ldots,s)=e^{s+O(s/\log s)}\geq e^{0.9s},
\end{align*}
for $s=\frac{1}{0.9}\log(1/5\epsilon)\geq \frac{1}{0.9}\log(1/5\epsilon_0)$ and $\epsilon_0$ sufficiently small. Hence, by (\ref{eq:5}) and (\ref{eq:6}) we have
\begin{align*}
    d\leq \frac{wN_0}{2}= \frac{N_0e^{0.9s}}{2s(r-1)!}\leq \frac{LN_0}{2s(r-1)!}=\frac{L}{2}M_0.
\end{align*}

Let $pM_0$ be the multiple of $M_0$ closest to $d$. Since $d>M_0$, we clearly have that $p\neq 0$. By definition,
\begin{align*}
    pM_0=\frac{pN_0}{s(r-1)!}\leq d +\left|\frac{pN_0}{s(r-1)!}-d\right|\leq d+ \frac{M_0}{2} < LM_0.
\end{align*}
Therefore, by the minimality of $LM_0$, there exists an index $1\leq j_0 \leq s/2$ such that $pM_0$ is not a multiple of $\frac{D_{j_0}}{(r-1)!}=(s-j_0+1)M_0$. Since, by the definition of $p$, all the other numbers of the form $mM_0$ have distance at least $\frac{M_0}{2}=\frac{N_0}{2s(r-1)!}$ to $d$, Proposition \ref{prop:dunhappy} follows. 
\end{proof}

We now prove that there exists a set $Y_{i,j}$ with a large proportion of elements of $X$.

\begin{proposition}\label{prop:largeprop}
There exist indices $(i_1,j_1)\in [w]\times [s/2]$ such that $|X\cap Y_{i_1,j_1}|\geq k/s.$
\end{proposition}

\begin{proof}
Let $I\subseteq [w]\times [s/2]$ be set of pair of indices defined by
\begin{align*}
    I=\left\{(i,j)\in [w]\times [s/2]:\:X\cap Y_{i,j}\neq \emptyset\right\},
\end{align*}
and let $\mathcal{Y}=\bigcup_{(i,j)\in I}Y_{i,j}$. By (\ref{eq:5}) and (\ref{eq:6}) we obtain that the difference between two consecutive terms of $X$ is bounded by
\begin{align*}
    |x_{h+1}-x_h|\leq (1+2\epsilon)d \leq (1+2\epsilon)\frac{e^{0.9s}N_0}{2s(r-1)!} < \frac{kN_0}{4s}\leq \frac{k(s-j+1)N_0}{2s^2}=rtD_{j}=|Y_{i,j}|,
\end{align*}
for $k\geq 2^rr!\epsilon^{-1}\log^r(1/5\epsilon)\geq \epsilon^{-1}/(r-1)!$. That is, the difference between two consecutive terms of $X$ is smaller than the size of an interval $Y_{i,j}$ for $(i,j)\in [w]\times [s/2]$. This implies that all intervals in $\mathcal{Y}$ must be consecutive. Recall that by construction two intervals $Y_{i,j}$ and $Y_{i',j'}$ are consecutive if $(i,j)$ and $(i',j')$ are consecutive in the lexicographical ordering of $[w]\times [s/2]$.

If $|I|\leq 2$, then by the pigeonhole principle there exist indices $(i_1,j_1)$ such that $|X\cap Y_{i_1,j_1}|\geq k/2\geq k/s$ for $\epsilon_0$ sufficiently small. Thus we may assume that $|I|>3$. This implies that there exists at least one pair of indices $(i',j')$ such that $Y_{i',j'}$ is neither the first or last interval of $\mathcal{Y}$.

Let $X\cap Y_{i',j'}=\{x_h,\ldots,x_{h+b-1}\}$, where $b=|X\cap Y_{i',j'}|$. Since $Y_{i',j'}$ is not one of intervals in the extreme of $\mathcal{Y}$, we obtain that~$2\leq h\leq h+b-1\leq k-1$ and in particular there exists points $x_{h-1}$ and $x_{h+b}$ outside of $Y_{i',j'}$. Then a simple computation gives us that
\begin{align*}
    |Y_{i',j'}|\leq |x_{h+b}-x_{h-1}|\leq (b+1+2\epsilon)d < 2bd
\end{align*}
and consequently
\begin{align}\label{eq:middle}
    |X\cap Y_{i',j'}|=b > \frac{|Y_{i',j'}|}{2d}
\end{align}
for any $Y_{i',j'}$ not on the extremes of $\mathcal{Y}$.

We split the proof into two cases depending on the size of $d$. If $d\leq \frac{N_0}{s(r-1)!}$, then (\ref{eq:5}) and (\ref{eq:middle}) give that
\begin{align*}
    |X\cap Y_{i',j'}| > \frac{|Y_{i',j'}|}{2d}=\frac{rtD_{j'}}{2d}\geq \frac{k(s-j'+1)(r-1)!}{4s}\geq\frac{k(r-1)!}{8}\geq \frac{k}{s}
\end{align*}
for every $Y_{i',j'}$ not on the extremes and sufficiently large $s$. Taking $(i_1,j_1)$ as one such $(i',j')$ gives the desired result.

Now suppose that $d>\frac{N_0}{s(r-1)!}$. Let $j_0$ be the index provided by Proposition \ref{prop:dunhappy}. In particular, it holds that
\begin{align}\label{eq:7}
    \left|d-\frac{mrD_{j_0}}{q}\right|\geq \frac{N_0}{2s(r-1)!}
\end{align}
for every $m\in \ZZ$ and $1\leq q \leq r$. Suppose that $X\cap Y_{i,j_0}\neq \emptyset$ for some $1\leq i \leq w$. Our goal is to apply Lemma \ref{lem:water} with $D=D_{j_0}$, $\delta=1/4sr!$ to the interval $(\min(Y_{i,j_0})-1,\max(Y_{i,j_0})]=(\beta_{i,j_0},\beta_{i,j_0}+rtD_j]$ labeled with our extension of $\chi_{i,j_0}$. In order to verify the assumptions of the lemma note that 
\begin{align*}
    \frac{N_0}{2s(r-1)!}=\frac{D_{j_0}}{2(s-j_0+1)(r-1)!}\geq \frac{D_{j_0}}{2s(r-1)!}>\delta r D_{j_0}
\end{align*}
and therefore by $(\ref{eq:7})$ we have 
\begin{align*}
    d\notin \bigcup_{m \in \ZZ}\bigcup_{q=1}^r\left(\left(\frac{m}{q}-\delta\right)rD_{j_0},\left(\frac{m}{q}+\delta\right)rD_{j_0}\right).
\end{align*}
Consequently, the conclusion of the lemma gives to us that any arithmetic progression of intervals of radius $\delta r D_{j_0}$ with common difference $d$ and a monochromatic transversal of label~$+1$ inside the interval $(\min(Y_{i,j_0})-1,\max(Y_{i,j_0})]$ has length bounded by~$3r/\delta$. This is true in particular for $\bigcup_{i=0}^{k-1}B(a+id,\epsilon d)$, since by (\ref{eq:5}) and (\ref{eq:6}) we have
\begin{align*}
    \epsilon d\leq \frac{\epsilon w N_0}{2}=\frac{N_0}{10s(r-1)!}= \frac{D_{j_0}}{10(s-j_0+1)(r-1)!}\leq \frac{D_{j_0}}{5s(r-1)!}<\delta r D_{j_0}.
\end{align*}
Hence, because $X$ is transversal of label $+1$ of $\bigcup_{i=0}^{k-1}B(a+id,\epsilon d)$, the conclusion of Lemma \ref{lem:water} gives for $k\geq 2^rr!\epsilon^{-1}\log^r(1/5\epsilon)> \frac{32}{3}r^2\epsilon^{-1}\log(1/5\epsilon)$ that 
\begin{align}\label{eq:special}
|X\cap Y_{i,j_0}|\leq \frac{3r}{\delta}=12sr! r=\frac{40}{3}r!r\log(1/5\epsilon)<\frac{5}{4}\epsilon(r-1)!k.
\end{align}

However, by (\ref{eq:5}), (\ref{eq:6}) and (\ref{eq:middle}) we have
\begin{align}\label{eq:*}
    |X\cap Y_{i',j'}| > \frac{|Y_{i',j'}|}{2d}=\frac{rtD_{j'}}{2d}\geq  \frac{1}{w N_0} \cdot \frac{k(s-j'+1)N_0}{2s^2} \geq \frac{k}{4w s}=\frac{5}{4}\epsilon(r-1)!k
\end{align}
for any $Y_{i',j'}$ in the middle of $\mathcal{Y}$. Comparing (\ref{eq:special}) and (\ref{eq:*}) yields that $|X\cap Y_{i,j_0}|<|X\cap Y_{i',j'}|$ for any interval $Y_{i',j'}$ in the middle of $\mathcal{Y}$. Thus $Y_{i,j_0}$ cannot be a middle interval and we obtain that if $(i,j_0)\in I$, then $Y_{i,j_0}$ is either the first or last interval of $\mathcal{Y}$. Therefore, we can have at most two occurrences of $j_0$ in $I$ and consequently the entire location of $I$ is contained between those two occurrences, i.e., $I \subseteq \{(i,j_0),(i,j_0+1),\ldots,(i+1,j_0-1),(i+1,j_0)\}$ for some $1\leq i \leq w-1$. Hence, the set $I$ has at most $s/2+1$ elements and by the pigeonhole principle there exists a pair of indices $(i_1,j_1)\in I$ such that $|X\cap Y_{i_1,j_1}|\geq k/(s/2+1) \geq k/s$.
\end{proof}

Let $(i_1,j_1)$ be the indices given by Proposition \ref{prop:largeprop}. Next we apply Proposition \ref{prop:simplelowerbound} to the set $Y_{i_1,j_1}$ labeled by $\chi_{i_1,j_1}$ with $D=D_{j_1}$, $\ell=k/s$ and $\epsilon$-approximate arithmetic progression $X\cap Y_{i_1,j_1}$. Note that by (\ref{eq:5}) the hypothesis concerning $r$, $t$ and $\ell$ in the statement holds since
\begin{align*}
t(r+1)+2=\frac{(r+1)k}{2rs}+2<\frac{k}{s}=\ell  
\end{align*}
for $r\geq 2$ and $k\geq 2^rr!\epsilon^{-1}\log^r(1/5\epsilon)\geq 80\log(1/5\epsilon)/9$. Also a $D_j$-block of $Y_{i_1,j_1}$ is an interval of the form $Z_{u,v}^{i_1,j_1}$. Hence, by the conclusion of the proposition, there exists $Z_{u,v}^{i_1,j_1}$ such that~$|X\cap Z_{u,v}^{i_1,j_1}|\geq \ell/(r-1)>k/rs$. Since each set $Z_{u,v}^{i,j}$ was $(r-1)$-colored inductively not to contain an $AP_{k/rs}(\epsilon)$, we reach a contradiction. Thus there is no monochromatic $AP_k(\epsilon)$ in~$[N_1]$. In view of (\ref{eq:5}) we have
\begin{align*}
    N_1=rwt(D_1+\ldots+D_{s/2})&=\frac{ke^{0.9s}N_0}{2s^3(r-1)!}\left(s+\ldots+\left(\frac{s}{2}+1\right)\right)\\
    &\geq \frac{kN_0}{40\epsilon s(r-1)!}\geq \frac{kN_0}{50(r-1)!\epsilon\log(1/\epsilon)}.
\end{align*}
Consequently, in view of $s=O(\log(1/5\epsilon))$ we obtain by induction that
\begin{align*}
    N_1\geq \frac{k}{50(r-1)!\epsilon \log(1/\epsilon)}\cdot\frac{c_r'\left(\frac{k}{rs}\right)^{r-1}}{\epsilon^{r-2}\log(1/\epsilon)^{\binom{r}{2}-1}}\geq c_r\frac{k^r}{\epsilon^{r-1}\log(1/\epsilon)^{\binom{r+1}{2}-1}}.
\end{align*}

\end{proof}

\section{Proof of Theorem \ref{th:approximateszemeredi}}\label{pf:szemeredi}

\subsection{Lower bound}
For positive integers $k$ and $N$, recall that $f(N,1,k)$, sometimes denoted by $r_k(N)$, is defined to be the size of the largest set $A \subseteq [N]$ without an arithmetic progression of length $k$. A classical result of Behrend \cite{B46} shows that,
\begin{align*}
    f(N,1,3)> N\exp(-c\sqrt{\log N}),
\end{align*}
for a positive constant $c$ (see \cite{E11, GW01} for slightly improvements). In \cite{R61} (See also \cite{LL01}) the argument was generalized to yield that
\begin{align}\label{eq:8}
    f(N,1,k) > N\exp\left(-c(\log N)^{1/\ell}\right),
\end{align}
where $\ell=\lceil \log_2 k \rceil$ and $k\geq 3$ and $c$ is a constant depending only on $k$. We will use the last result as a building block for our construction.

Before we turn our attention to the lower bound construction, we will state a preliminary result about $\epsilon$-approximate arithmetic progressions. Given a set of $k$ integers, one can identify them as an $AP_k$ by the common difference between the elements. Unfortunately, the same is not true for an $AP_k(\epsilon)$. On the positive side, the next result shows that if a set of $k$ elements is an $AP_k(\epsilon)$, then the differences of consecutive terms are almost equal.

\begin{proposition}\label{prop:characterization}
Given $0<\epsilon<1/10$, let $X=\left\{x_0,\ldots, x_{k-1}\right\}$ be an $\AP_k(\epsilon)$. Then for every pair of indices $0\leq i, j \leq k-2$ the following holds
\begin{align*}
    \left|\frac{|x_{j+1}-x_j|}{|x_{i+1}-x_i|}-1\right|< 5\epsilon.
\end{align*}
\end{proposition}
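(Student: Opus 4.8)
The statement is really a routine consequence of unfolding Definition~\ref{def:ap_k(epsilon)}, so the proof will be short and entirely elementary. Since $X=\{x_0,\ldots,x_{k-1}\}$ is an $\AP_k(\epsilon)$, fix $a\in\RR$ and $d>0$ with $|x_i-(a+id)|<\epsilon d$ for all $i$, and write $e_i=x_i-(a+id)$, so that $|e_i|<\epsilon d$. The first step is to control each consecutive difference: for $0\le i\le k-2$,
\begin{align*}
    x_{i+1}-x_i=\bigl((a+(i+1)d)+e_{i+1}\bigr)-\bigl((a+id)+e_i\bigr)=d+e_{i+1}-e_i,
\end{align*}
hence $|(x_{i+1}-x_i)-d|<2\epsilon d$, which gives
\begin{align*}
    (1-2\epsilon)d<x_{i+1}-x_i<(1+2\epsilon)d.
\end{align*}
Since $\epsilon<1/10<1/2$ the left-hand side is positive, so $x_{i+1}-x_i>0$ and $|x_{i+1}-x_i|=x_{i+1}-x_i$ lies strictly between $(1-2\epsilon)d$ and $(1+2\epsilon)d$.

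The second step is to take the ratio. For any pair $0\le i,j\le k-2$, combining the upper bound on $|x_{j+1}-x_j|$ with the lower bound on $|x_{i+1}-x_i|$ (and vice versa) yields
\begin{align*}
    \frac{1-2\epsilon}{1+2\epsilon}<\frac{|x_{j+1}-x_j|}{|x_{i+1}-x_i|}<\frac{1+2\epsilon}{1-2\epsilon},
\end{align*}
so that
\begin{align*}
    \left|\frac{|x_{j+1}-x_j|}{|x_{i+1}-x_i|}-1\right|<\max\left\{\frac{1+2\epsilon}{1-2\epsilon}-1,\ 1-\frac{1-2\epsilon}{1+2\epsilon}\right\}=\frac{4\epsilon}{1-2\epsilon}.
\end{align*}
The final step is to use the hypothesis $\epsilon<1/10$: then $1-2\epsilon>4/5$, whence $\tfrac{4\epsilon}{1-2\epsilon}<\tfrac{4\epsilon}{4/5}=5\epsilon$, which is exactly the claimed bound.

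There is no real obstacle here; the only thing to be slightly careful about is that the positivity of $d$ and the constraint $\epsilon<1/2$ are what let us drop the absolute values and identify $|x_{i+1}-x_i|$ with $x_{i+1}-x_i$, and that the precise constant $5$ is tied to the threshold $1/10$ through the elementary inequality $\tfrac{4\epsilon}{1-2\epsilon}<5\epsilon$. Everything else is bookkeeping with the two-sided estimate on consecutive differences.
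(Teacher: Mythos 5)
Your proof is correct and takes essentially the same route as the paper: both unfold the definition to bound each consecutive difference between $(1-2\epsilon)d$ and $(1+2\epsilon)d$, then take the ratio and use $\epsilon<1/10$ to arrive at the constant $5$. You spell out the error terms and the final elementary inequality $\tfrac{4\epsilon}{1-2\epsilon}<5\epsilon$ a bit more explicitly than the paper does, but the argument is the same.
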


\begin{proof}
Since $X$ is an $\AP_k(\epsilon)$, there exist $a$ and $d$ such that $|x_i-(a+id)|<\epsilon d$ for~$0\leq i \leq k-1$. Therefore, a simple computation shows that
\begin{align*}
    1-5\epsilon<\frac{(1-2\epsilon)d}{(1+2\epsilon)d}<\frac{|x_{j+1}-x_j|}{|x_{i+1}-x_i|}<\frac{(1+2\epsilon)d}{(1-2\epsilon)d}<1+5\epsilon
\end{align*}
for $0<\epsilon<1/10$ and $0\leq i,j \leq k-2$.
\end{proof}

We now prove the lower bound of Theorem \ref{th:approximatewaerden} for one dimension. 

\begin{lemma}\label{lem:lowerszemeredi}
Let $k\geq 3$ and $0<\epsilon\leq 1/125$. Then there exists a positive constant~$c_1$ depending only on $k$ and an integer $N_0:=N_0(k,\epsilon)$ such that the following holds. If $N\geq N_0$, then there exists a set $A\subseteq [N]$ without $AP_k(\epsilon)$ such that
\begin{align*}
    |A|\geq N^{1-c_1(\log(1/\epsilon))^{\frac{1}\ell-1}}
\end{align*}
for $\ell=\lceil \log_2 k\rceil$.
\end{lemma}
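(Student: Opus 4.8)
The plan is to build the set $A$ by taking a set $A_0 \subseteq [M]$ free of genuine $\AP_k$'s — from the Rankin-type bound \eqref{eq:8}, so $|A_0| \geq M \exp(-c(\log M)^{1/\ell})$ — and then ``spreading it out'' so that the gaps between surviving elements are large enough to destroy every $\epsilon$-approximate progression as well. Concretely, I would fix a spreading parameter $t = t(\epsilon)$ and map $[M] \to [N]$ with $N \approx tM$ by sending the $j$-th element to roughly $jt$, i.e. replace each point by a well-separated copy; the image $A$ then lives in $[N]$ with $N = tM$ and $|A| = |A_0| \geq (N/t)\exp(-c(\log(N/t))^{1/\ell})$. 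Choosing $t$ as a suitable power of $\log(1/\epsilon)$ (guided by matching the two exponents in the target bound) should give $|A| \geq N^{1-c_1(\log(1/\epsilon))^{1/\ell-1}}$ after taking logarithms and using $\log t = \Theta(\log\log(1/\epsilon))$ against $(\log N)^{1/\ell}$; this optimization is the routine part and essentially forces the exponent $1/\ell - 1$.

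The substantive step is verifying that the spread-out set $A$ is genuinely free of $\AP_k(\epsilon)$. Here is where Proposition \ref{prop:characterization} does the work. Suppose $X = \{x_0,\dots,x_{k-1}\} \subseteq A$ were an $\AP_k(\epsilon)$. Pulling back through the spreading map, $X$ corresponds to a $k$-subset $Y = \{y_0,\dots,y_{k-1}\} \subseteq A_0$, and the consecutive gaps satisfy $x_{i+1}-x_i = t(y_{i+1}-y_i) + O(1)$ (the $O(1)$ coming from rounding in the definition of the map, which is why I will want $t$ large compared to any rounding error — this is the role of the footnoted ``we ignore integrality'' convention). By Proposition \ref{prop:characterization}, all the ratios $|x_{i+1}-x_i|/|x_{j+1}-x_j|$ lie within $5\epsilon$ of $1$; since each gap is an integer multiple of $t$ (up to the controlled rounding), and $t$ is chosen so that $t > $ (a suitable function of $1/\epsilon$), two integers $y_{i+1}-y_i$ and $y_{j+1}-y_j$ whose $t$-scaled versions are within a factor $1+5\epsilon$ must in fact be \emph{equal}. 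Hence all consecutive gaps of $Y$ are equal, so $Y$ is a genuine arithmetic progression of length $k$ inside $A_0$ — contradiction.

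The quantitative heart of that argument is the inequality chain: if $a,b$ are positive integers with $|ta/(tb) - 1| < 5\epsilon$, i.e. $|a-b| < 5\epsilon b$, then as long as $5\epsilon b < 1$ we get $a = b$; but $b$ can be as large as $\approx M$, so a naive bound fails. The fix is to work not with the raw gaps but with the gaps \emph{of the approximate progression}: since $|x_{i+1}-x_i| \le (1+2\epsilon)d$ for the common difference $d$ of $X$, and $|x_{i+1}-x_i| \ge (1-2\epsilon)d$, every gap of $X$ is comparable to $d$, hence (dividing by $t$) every gap $y_{i+1}-y_i$ of $Y$ lies in an interval $[d'/(1+5\epsilon), d'(1+5\epsilon)]$ around a single value $d' \approx d/t$. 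So it suffices that $5\epsilon \cdot (\text{max gap}) < t \cdot 1$ is replaced by $5\epsilon d' < 1$, and $d' \le N/(t(k-1)) \cdot (1+2\epsilon)$; choosing $t$ so that $d'$ itself — not $M$ — satisfies $5\epsilon d' < 1$ would require $t$ huge, which is wrong. The actual resolution: the gaps $y_{i+1}-y_i$ are \emph{integers} in a short interval of multiplicative width $1+O(\epsilon)$; such an interval $[u, u(1+5\epsilon)]$ contains at most one integer once $5\epsilon u < 1$, but if $u$ is large it contains several — yet they must all be realized as gaps of an honest progression-like object, and a short \emph{additive} argument (consecutive gaps of $X$ differ by at most $4\epsilon d$, so consecutive $y$-gaps differ by at most $4\epsilon d/t < 1$ when $t > 4\epsilon d_{\max}$... ) still needs $t \gtrsim \epsilon N/k$. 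I expect the correct statement is that one only needs $t \gtrsim 1/\epsilon$ together with the observation that all $k-1$ gaps being integers within additive distance $<1$ of each other forces them equal; making this precise — pinning down exactly how large $t$ must be as a function of $\epsilon$ and $k$ and checking it is compatible with the claimed $|A|$ bound — is the main obstacle and the step I would write out most carefully.
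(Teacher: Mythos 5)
The core of your proposal --- scale an $\AP_k$-free set $A_0\subseteq[M]$ by a factor $t$ and argue that the multiplicative tolerance $1\pm5\epsilon$ of Proposition~\ref{prop:characterization} cannot close an integer gap --- has a problem that you yourself diagnose but do not resolve. The tolerance is relative, so it permits absolute slack of order $5\epsilon\cdot(\text{gap})$, and gaps inside $A_0$ can be as large as $\approx M$. Scaling by $t$ multiplies the gaps and the permitted slack by the same factor, so $t$ cancels identically: after pulling an $\AP_k(\epsilon)$ in $A$ back to $Y\subseteq A_0$, all you learn is that consecutive $y$-gaps lie within a multiplicative factor $1\pm5\epsilon$ of each other, which is useless unless $5\epsilon\cdot(\text{gap in }A_0)<1$, i.e.\ $M\lesssim 1/\epsilon$ --- far too small for a nontrivial density. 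Your attempted fix (``$t\gtrsim 1/\epsilon$, plus all gaps within additive distance $<1$'') runs into the same cancellation: the condition $t>4\epsilon d$ with $d\lesssim N/k= tM/k$ again forces $M\lesssim k/\epsilon$. A one-step linear stretch is not repairable here; this is a wrong approach, not a missing detail.

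The paper avoids the issue with a fundamentally different, base-$q$ hierarchical construction, $q\approx 1/\epsilon$. Elements are written in base $q$; the leading digit lives in an $\AP_k$-free subset of $[0,q-1]$, and --- this is the ingredient you are missing --- every \emph{lower} digit is confined to an $\AP_k$-free subset of the middle fifth $[2q/5,3q/5]$. That middle restriction guarantees that for any two elements, the combined contribution of all digits below the top nonconstant position $j_0$ is at most $\tfrac{2}{5}q^{j_0}$ in absolute value, so each gap $|x_{i+1}-x_i|$ is pinned to $|x_{i+1,j_0}-x_{i,j_0}|\,q^{j_0}$ up to additive error $\tfrac{2}{5}q^{j_0}$ (this is what replaces your ``$+O(1)$'', and it is where the construction earns its keep). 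Since the level-$j_0$ digits lie in an $\AP_k$-free set and are not all equal, two of the digit-gaps differ, forcing two of the $|x_{i+1}-x_i|$ to differ by at least $q^{j_0}/5$; but Proposition~\ref{prop:characterization}, together with $|x_{i,j_0}|<q$, bounds that same difference strictly below $5\epsilon q\cdot q^{j_0}=q^{j_0}/5$, a contradiction. In short, the correct construction controls the lower-order content of every gap \emph{at every scale simultaneously}, rather than relying on a single global scaling factor whose effect cancels.
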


\begin{proof}
For integers $a,b$, let $S_k([a,b])$ be the largest subset in the interval $[a,b]$ without any arithmetic progression $\AP_k$ of length $k$. By a simple translation, one can note that $S_k([a,b])$ has the same size as $S_k([b-a+1])$ and by (\ref{eq:8}) we have
\begin{align}\label{eq:9}
    |S_k([a,b])|=f(b-a+1,1,k)\geq (b-a+1)\exp\left(-c(\log(b-a+1))^{1/\ell}\right),
\end{align}
for a positive constant $c$ and $\ell=\lceil \log_2 k \rceil$.

Let $q=\frac{1}{25\epsilon}\geq 5$ be an integer and $h$ be largest exponent such that $q^h\leq N < q^{h+1}$. For such a choice of $q$ and $h$, we construct the set
\begin{align*}
    A=\left\{s\in [N]:\: a=s_0+s_1q+\ldots+s_{h-1}q^{h-1}\right\},
\end{align*}
where $s_{h-1}\in S_k([0,q-1])$ and $s_i \in S_k\left(\left[2q/5,3q/5\right]\right)$ for $0\leq i \leq h-2$. Our goal is to show that~$A$ satisfies the conclusion of Lemma \ref{lem:lowerszemeredi}.

First note by (\ref{eq:9}) that
\begin{align*}
    |A|&= |S_k([0,q-1])|\cdot \left|S_k\left(\left[2q/5,3q/5\right]\right)\right|^{h-1}\\
    &\geq \frac{q}{\exp(c(\log q)^{1/\ell})}\cdot \left(\frac{q}{5\exp(c(\log q/5)^{1/\ell})}\right)^{h-1}\\
    &\geq \frac{q^h}{\exp(c(\log q)^{1/\ell})(5\exp(c(\log q)^{1/\ell}))^{h-1}}\\
    &\geq \frac{N}{5^{h-1}q\exp(c(\log q)^{1/\ell})^h}\geq \frac{N}{q\exp(c'h(\log q)^{1/\ell})},
\end{align*}
and in view of $h\leq \frac{\log N}{\log q}$ and our choice of $q$ we obtain that
\begin{align*}
    |A|\geq \frac{20\epsilon N}{\exp\left(c'\log N(\log q)^{1/\ell-1}\right)} \geq N^{1-c_1\log (1/\epsilon)^{1/\ell-1}}
\end{align*}
for sufficiently large $N$ and appropriate constant $c_1$ depending only on $k$. Therefore the set~$A$ has the desired size. It remains to prove that $A$ is $\AP_k(\epsilon)$-free. 

Suppose that there exists an $\epsilon$-approximate arithmetic progression $X=\{x_0,\ldots,x_{k-1}\}$ in $A$. For each $0\leq i \leq k-1$, write $x_i=\sum_{j=0}^{h-1}x_{i,j}q^j$. Since all $x_i$'s are distinct, there exists a maximal index $j_0$ such that the elements of $X_{j_0}=\{x_{i,j_0}\}_{0\leq i \leq k-1}$ are not all equal. By construction of $A$ the set $X_{j_0}$ fails to be an $\AP_k$. Therefore there exists two indices $0\leq i_1, i_2 \leq k-2$ such that
\begin{align}\label{eq:10}
    |x_{i_1+1,j_0}-x_{i_1,j_0}|\neq |x_{i_2+1,j_0}-x_{i_2,j_0}|.
\end{align}

For $0\leq i \leq k-1$, note that 
\begin{align*}
    |x_{i+1}-x_i|=\left|\sum_{j=0}^{h-1}(x_{i+1,j}-x_{i,j})q^j\right|=\left|\sum_{j=0}^{j_0}(x_{i+1,j}-x_{i,j})q^j\right|
\end{align*}
by the maximality of $j_0$. Thus by the triange inequality we obtain that
\begin{align}\label{eq:11}
    \left||x_{i+1}-x_i|-|x_{i+1,j_0}-x_{i,j_0}|q^{j_0}\right|\leq \sum_{j=0}^{j_0-1}|x_{i+1,j}-x_{i,j}|q^j.
\end{align}
Moreover, recalling that $x_{i,j} \in \left[2q/5,3q/5\right]$ for $0\leq j \leq h-2$ we infer that
\begin{align*}
    \sum_{j=0}^{j_0-1}|x_{i+1,j}-x_{i,j}|q^j\leq \sum_{j=0}^{j_0-1}\frac{q^{j+1}}{5} \leq \frac{2q^{j_0}}{5} 
\end{align*}
for $q\geq 2$. The last inequality combined with (\ref{eq:11}) gives us that
\begin{align}\label{eq:12}
  \left||x_{i+1}-x_i|-|x_{i+1,j_0}-x_{i,j_0}|q^{j_0}\right|\leq \frac{2}{5}q^{j_0},
\end{align}
for $0\leq i \leq k-2$. Hence by (\ref{eq:10}) we have that
\begin{align}\label{eq:13}
    \big||x_{i_2+1}-x_{i_2}|-|x_{i_1+1}-x_{i_1}|\big|&\geq \big||x_{i_2+1,j_0}-x_{i_2,j_0}|-|x_{i_1+1,j_0}-x_{i_1,j_0}|\big|q^{j_0}-\frac{4}{5}q^{j_0}\nonumber\\
    &\geq q^{j_0}-\frac{4}{5}q^{j_0}=\frac{q^{j_0}}{5}
\end{align}

On the other hand, Proposition \ref{prop:characterization} for $i_1$ and $i_2$ together with (\ref{eq:12}) gives us that
\begin{align*}
    \big||x_{i_2+1}-x_{i_2}|-|x_{i_1+1}-x_{i_1}|\big| <5\epsilon|x_{i_1+1}-x_{i_1}|<5\epsilon\left(|x_{i_1+1,j_0}-x_{i_1,j_0}|+\frac{2}{5}\right)q^{j_0}.
\end{align*}
Since $x_{i,j_0} \in [0,q-1]$ for every $0\leq i \leq k-1$ and $\epsilon q=1/25$ we have
\begin{align*}
     \big||x_{i_2+1}-x_{i_2}|-|x_{i_1+1}-x_{i_1}|\big|<5\epsilon q^{j_0+1}=\frac{q^{j_0}}{5},
\end{align*}
which contradicts (\ref{eq:13}).
\end{proof}

For higher dimensions the result follow as a corollary of Lemma \ref{lem:lowerszemeredi}. Recall by Definition~\ref{def:cube} that an $\epsilon$-approximate cube $C_\epsilon(m,k)$ is just an multidimensional version of an $\AP_k(\epsilon)$ 

\begin{corollary}
Let $k\geq 3$ and $m\geq 1$ be integers and $0<\epsilon\leq 1/125$. Then there exists an integer $N_0:=N_0(k,\epsilon)$ and a positive constant $c_1$ depending on $k$ such that the following holds. If $N\geq N_0$, then there exists a set $S\subseteq [N]^m$ without $C_{\epsilon}(m,k)$ such that
\begin{align*}
    |S|\geq N^{m-c(\log(1/\epsilon))^{\frac{1}\ell-1}}
\end{align*}
for $\ell=\lceil \log_2(k-1)\rceil$.
\end{corollary}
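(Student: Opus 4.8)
The plan is to reduce the $m$-dimensional statement to the one-dimensional Lemma~\ref{lem:lowerszemeredi} by taking a Cartesian power of the set it produces. First I would apply Lemma~\ref{lem:lowerszemeredi} with parameter $k$ to obtain, for every $N\geq N_0(k,\epsilon)$, a set $A\subseteq[N]$ with no $\AP_k(\epsilon)$ and with $|A|\geq N^{1-c_1(\log(1/\epsilon))^{1/\lceil\log_2 k\rceil-1}}$, and then set $S:=A^m\subseteq[N]^m$. For the size estimate, since $\lceil\log_2(k-1)\rceil\leq\lceil\log_2 k\rceil$ and $\log(1/\epsilon)\geq\log 125>1$, the exponents satisfy $1/\lceil\log_2 k\rceil-1\leq 1/\lceil\log_2(k-1)\rceil-1$, hence $(\log(1/\epsilon))^{1/\lceil\log_2 k\rceil-1}\leq(\log(1/\epsilon))^{1/\lceil\log_2(k-1)\rceil-1}$; therefore
\begin{align*}
|S|=|A|^m\geq N^{m-mc_1(\log(1/\epsilon))^{1/\lceil\log_2 k\rceil-1}}\geq N^{m-mc_1(\log(1/\epsilon))^{1/\lceil\log_2(k-1)\rceil-1}},
\end{align*}
which is the asserted bound with the constant $c:=mc_1$ (depending only on $k$ and $m$).

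The main step is to check that $S=A^m$ contains no $C_\epsilon(m,k)$. I would argue by contradiction: suppose $X=\{x_{\vec v}:\vec v\in\{0,1,\ldots,k-1\}^m\}\subseteq S$ is an $\epsilon$-approximate cube, witnessed by $\vec a=(a_1,\ldots,a_m)\in\RR^m$ and $d>0$ with $\|x_{\vec v}-(\vec a+d\vec v)\|<\epsilon d$ for all $\vec v$. The key observation is that a single ``row'' of the cube projects onto an $\AP_k(\epsilon)$ in the first coordinate: fixing the last $m-1$ index coordinates equal to $0$ and writing $y_j:=x_{(j,0,\ldots,0)}$ with first coordinate $y_j^{(1)}$, one has $|y_j^{(1)}-(a_1+jd)|\leq\|x_{(j,0,\ldots,0)}-(\vec a+d(j,0,\ldots,0))\|<\epsilon d$, so $\{y_0^{(1)},\ldots,y_{k-1}^{(1)}\}$ is an $\AP_k(\epsilon)$ with parameters $a_1$ and $d$. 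Each $y_j^{(1)}$ lies in $A$ because $y_j\in A^m$, so $A$ contains an $\AP_k(\epsilon)$, contradicting the choice of $A$.

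I expect the only point needing real care — and the place any obstacle would hide — is verifying that this extracted row consists of $k$ \emph{distinct} integers, so that it is genuinely an $\AP_k(\epsilon)$ rather than a degenerate configuration with repetitions (which $\AP_k(\epsilon)$-freeness of $A$ would not rule out). This is where the hypothesis $\epsilon\leq 1/125<1/2$ enters: for $j\neq j'$ one gets $|y_j^{(1)}-y_{j'}^{(1)}|>|j-j'|d-2\epsilon d\geq(1-2\epsilon)d>0$. The same disjointness-of-balls remark also shows that every $C_\epsilon(m,k)$ automatically has $k^m$ distinct points when $\epsilon<1/2$, so the notion of an $\epsilon$-approximate cube is unambiguous; the rest of the argument is routine bookkeeping.
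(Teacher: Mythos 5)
Your proposal is correct and reaches the same conclusion by essentially the same mechanism — project one row of the would-be cube onto a single coordinate and invoke Lemma~\ref{lem:lowerszemeredi} — but via a different construction. You take $S=A^m$, whereas the paper takes $S=A\times[N]^{m-1}$: only the first coordinate is constrained to lie in $A$, and the cube is killed by projecting onto that one coordinate. The paper's choice is the more economical one. With $A\times[N]^{m-1}$ one gets $|S|=|A|\cdot N^{m-1}\geq N^{m-c_1(\log(1/\epsilon))^{1/\ell-1}}$ with the \emph{same} constant $c_1$ as in the one-dimensional lemma, so $c_1$ genuinely depends only on $k$ as the corollary asserts. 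Your $|S|=|A|^m$ yields the exponent $m-mc_1(\log(1/\epsilon))^{1/\ell-1}$, i.e.\ the constant $mc_1$ depending on both $m$ and $k$. That is still fine against Theorem~\ref{th:approximateszemeredi} (whose constants are allowed to depend on $m$), but it is a needless loss, and it does not literally match the corollary's promise that the constant depends only on $k$. Your remark on distinctness of the projected row (so the projection is a bona fide $\AP_k(\epsilon)$, a $k$-element set) is a legitimate point that the paper glosses over; the argument $|y^{(1)}_j-y^{(1)}_{j'}|\geq(1-2\epsilon)d>0$ for $\epsilon<1/2$ settles it. Finally, note that the $\ell=\lceil\log_2(k-1)\rceil$ in the corollary's statement is a (harmless) weakening of the $\lceil\log_2 k\rceil$ that the one-dimensional lemma actually delivers; your monotonicity remark handles the discrepancy correctly.
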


\begin{proof}
Let $N_0$ be the integer given by Lemma \ref{lem:lowerszemeredi} and let $A\subseteq [N]$ be the set such that $A$ has no~$\AP_k(\epsilon)$ for $N\geq N_0$. Set $S=A\times [N]^{m-1}$, i.e., $S=\left\{(s_1,\ldots,s_m):\: s_1\in A,\, s_2,\ldots,s_m \in [N]\right\}$. Note that $S$ has the desired size since
\begin{align*}
    |S|=N^{m-1}|A|\geq N^{m-c(\log(1/\epsilon))^{\frac{1}\ell-1}}.
\end{align*}
We claim that $S$ is free of $C_{\epsilon}(m,k)$.

Suppose that the claim is not true and let $X=\{x_{\vec{v}}:\: \vec{v}\in \{0,\ldots,k-1\}^m\}$ be an $C_{\epsilon}(m,k)$ in $S$. By definition there exists $\vec{a} \in \RR^m$ and $d>0$ such that $||x_{\vec{v}}-(\vec{a}+d\vec{v})||<\epsilon d$ for every~$\vec{v} \in \{0,\ldots,k-1\}^m$. In particular, when applied to $\{te_1=(t,0,\ldots,0):\: 0\leq t \leq k-1\}$ the observation gives us that
\begin{align*}
    |x_{te_1,1}-(a_1+td)| \leq \left((x_{te_1,1}-(a_1+dt))^2+\sum_{i=2}^m(x_{te_1,i}-a_i)^2\right)^{1/2}=||x_{te_1}-(\vec{a}+dte_1)||<\epsilon d
\end{align*}
Therefore, the set $\{x_{te_1,1}\}_{0\leq t \leq k-1}\subseteq A$ is an $\AP_k(\epsilon)$, which contradicts our choice of $A$.
\end{proof}

\subsection{Upper bound}

As in the upper bound of $W_\epsilon(k,r)$, our proof of the upper bound of $f_\epsilon(N,m,k)$ will use an iterative blow-up construction. It is worth to point out that a similar proof was obtained independently by Dumitrescu in \cite{D11}. While both proofs use a blow-up construction, the author of \cite{D11} finishes the proof with a packing argument. Here we will follow the approach of \cite{HKSS19, HKSS19-2}, which uses an iterative blow-up construction combined with an average argument to estimate the largest subset of a grid without a class of configurations of a given size. This approach allows us to slightly improve the constants in the result.


The proof is split into two auxiliary lemmas.

\begin{lemma}\label{lem:upperszemeredi}
Given positive real numbers $\alpha$, $\epsilon>0$ and integers $m\geq 1$ and $k\geq 3$, there exists~$N_0:=N_0(\alpha,\epsilon,m,k)\leq \left(k\sqrt{m}/\epsilon\right)^{2k^m\log(1/\alpha)}$ and a subset $A \subseteq [N]^m$ with the property that any $X\subseteq A$, $|X|\geq \alpha|A|$ contains a $C_{\epsilon}(m,k)$.
\end{lemma}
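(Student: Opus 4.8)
The plan is to use an iterated blow-up, exactly parallel to the construction of $B_r$ in the proof of Proposition~\ref{prop:upperboundwaerden}, but now iterated a number $h$ of times that depends on $\alpha$. First I would fix a scaling parameter $t=\lceil k\sqrt{m}/\epsilon\rceil$ and, for a given level $h$, define $A=A_h\subseteq[N]^m$ to be the $h$-fold $t$-blow-up of $[k]^m$; concretely, $A_h$ is the set of lattice points $\sum_{i=0}^{h-1} t^i\,\vec{b}_i$ with each $\vec{b}_i\in\{0,1,\ldots,k-1\}^m$, so that $|A_h|=k^{hm}$ and $\diam(A_h)\le 2(k-1)\sqrt{m}\,t^{h-1}$, giving $N=N_0\le (k\sqrt{m}/\epsilon)^{O(h)}$. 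The key structural observation, again as in Proposition~\ref{prop:upperboundwaerden}, is that $A_h$ partitions into $k^m$ translates $A_{h}^{(\vec{v})}$, $\vec{v}\in\{0,\ldots,k-1\}^m$, each a copy of $A_{h-1}$ translated by $t^{h-1}\vec{v}$, and that any transversal $\{x_{\vec{v}}\}$ with $x_{\vec{v}}\in A_h^{(\vec v)}$ is automatically a $C_\epsilon(m,k)$: taking $\vec{a}$ the center of $A_{h-1}$ and $d=t^{h-1}$, one has $\|x_{\vec{v}}-(\vec a+d\vec v)\|\le \tfrac12\diam(A_{h-1})\le (k-1)\sqrt{m}\,t^{h-2}\le \epsilon d$ by the choice of $t$.

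Next I would run the averaging argument of~\cite{HKSS19,HKSS19-2} to pick $h$. Suppose $X\subseteq A_h$ has $|X|\ge\alpha|A_h|$ and contains no $C_\epsilon(m,k)$. By the transversal observation, for every one of the $k^{(h-1)m}$ "columns" — i.e.\ for every choice of $\vec{b}_1,\ldots,\vec{b}_{h-1}$ determining a $k^m$-point copy of $[k]^m$ at the bottom level — $X$ must miss at least one of the $k^m$ points of that copy, since otherwise that copy would be a monochromatic (all-in-$X$) transversal and hence a $C_\epsilon(m,k)$. Wait — more carefully: I should set this up recursively on the level. Let $\beta_j$ be the largest density such that the $j$-fold blow-up $A_j$ admits a $C_\epsilon$-free subset of density $\ge\beta_j$. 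The column argument shows that if $X\subseteq A_h$ is $C_\epsilon$-free of density $\alpha$, then in at least an $\alpha/k^m$-fraction... rather, a cleaner route: by convexity/averaging over the $k^m$ translates $A_h^{(\vec v)}$, the average density of $X$ inside a translate is $\alpha$, and $C_\epsilon$-freeness of $X$ forbids all $k^m$ translates from simultaneously being "full" along a common sub-copy; iterating, one gets $\beta_h\le \beta_{h-1}\cdot(1-k^{-m})+\text{(something)}$, or more simply $\beta_h \le 1-\tfrac{1}{k^m}(1-\beta_{h-1})$ is \emph{not} quite it either. The honest statement I will prove: a $C_\epsilon$-free subset of $A_h$ of density $\alpha$ forces a $C_\epsilon$-free subset of $A_{h-1}$ of density at least $\alpha^{k^m}$ (by averaging the product structure over columns and applying the transversal fact pointwise), hence iterating $h$ times down to $A_0=\{\text{pt}\}$, density $1$, gives $1\ge \alpha^{k^{mh}}$... which is vacuous. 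So the correct formulation is the other direction: density $\alpha$ in $A_h$ gives, in some translate $A_h^{(\vec v)}\cong A_{h-1}$, a $C_\epsilon$-free subset of density $\ge\alpha$, AND the "all translates full" obstruction kills one point per bottom copy, so that $\alpha \le 1 - k^{-(h-1)m}$ is forced, i.e.\ $h\ge \log_{k^m}\!\big(1/(1-\alpha)\big)$ suffices; choosing $h = \lceil 2k^m\log(1/\alpha)\rceil$ (which dominates this) completes the count $N_0\le (k\sqrt m/\epsilon)^{2k^m\log(1/\alpha)}$.

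So the concrete skeleton is: (i) define $A_h$ and $t$; (ii) prove the transversal lemma $\Rightarrow$ any full bottom copy is a $C_\epsilon(m,k)$, using the center/$d=t^{h-1}$ computation and $t\ge k\sqrt m/\epsilon$; (iii) prove that a $C_\epsilon$-free $X$ of density $\alpha$ forces $\alpha\le 1-k^{-hm}$, or run the sharper $\beta_h$ recursion from~\cite{HKSS19}, to conclude $h=\lceil 2k^m\log(1/\alpha)\rceil$ works; (iv) read off $N_0=k^{hm}\cdot(\text{diam factor})\le (k\sqrt m/\epsilon)^{2k^m\log(1/\alpha)}$. The main obstacle — and the step I expect to require the most care — is (iii): getting the iteration constant right so that the bound is genuinely $(k\sqrt m/\epsilon)^{2k^m\log(1/\alpha)}$ rather than a worse tower, which is precisely where the averaging argument of~\cite{HKSS19,HKSS19-2} (rather than a crude packing bound) is needed; one must track how the density deficit $1-\alpha$ is amplified by a factor $k^m$ at each level down, and verify the arithmetic $h\log k^m \le 2k^m\log(1/\alpha)\cdot \log k$ together with the diameter estimate so that the final $N_0$ meets the stated bound. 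The transversal computation in (ii) is routine once $t=\lceil k\sqrt m/\epsilon\rceil$ is fixed (the factor $\sqrt m$ is exactly what upgrades the one-dimensional bound to $\ell_2$ in $\RR^m$), and (i), (iv) are bookkeeping.
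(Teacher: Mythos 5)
Your steps (i), (ii), and (iv) match the paper's proof exactly: the same $r$-fold blow-up $A_r$ with scaling $t=k\sqrt m/\epsilon$, the same transversal observation (a full set of representatives, one per translate $A_{r,\vec v}$, is automatically a $C_\epsilon(m,k)$ with $d=t^{r-1}$), and the same diameter bookkeeping. The problem is in step (iii), which you yourself flag as shaky, and it is: the density recursion you write down does not work.

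The inequality $\alpha\le 1-k^{-(h-1)m}$ (and its later variant $\alpha\le 1-k^{-hm}$) does not follow from ``one point missed per bottom copy'' --- that argument only gives the $h$-independent bound $\alpha\le 1-k^{-m}$ --- and even if it did, it runs in the wrong direction: to rule out a $C_\epsilon$-free set of density $\alpha$ you would need $\alpha>1-k^{-hm}$, which holds only for \emph{small} $h$, the opposite of what an iterated blow-up should buy you. Relatedly, your claim that $h=\lceil 2k^m\log(1/\alpha)\rceil$ ``dominates'' $\log_{k^m}(1/(1-\alpha))$ is false: as $\alpha\to 1^-$, $\log(1/\alpha)\to 0$ while $\log(1/(1-\alpha))\to\infty$. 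The missing idea is the \emph{amplification} in the averaging step. A $C_\epsilon$-free $X\subseteq A_r$ of density $\alpha$ must miss some translate $A_{r,\vec u_0}$ entirely (else pick a transversal); averaging $|X|$ over the remaining $k^m-1$ translates then yields one translate $A_{r,\vec u_1}\cong A_{r-1}$ containing a $C_\epsilon$-free subset of density at least $\alpha\cdot\frac{k^m}{k^m-1}$, strictly \emph{larger} than $\alpha$. Iterating $r$ times gives density $\ge\alpha\,(k^m/(k^m-1))^r$ in a single cell $\Delta$, which must be $\le 1$; hence any $C_\epsilon$-free subset has density $\le((k^m-1)/k^m)^r$. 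Taking $r=\lceil\log(1/\alpha)/\log(k^m/(k^m-1))\rceil<2k^m\log(1/\alpha)$ then makes this threshold fall below $\alpha$, and the diameter computation gives the stated $N_0$. Without the $\frac{k^m}{k^m-1}$ gain at each level you never obtain the geometric decay in $r$, and the bound on $h$ cannot be made to work.
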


\begin{proof}
For $m$ and $k$, let $\Delta$ be the standard cube $C(m,k)$ of dimension $m$ over~$\{0,\ldots,k-1\}$, i.e., $\Delta$ is the set of all $m$-tuples $\vec{v}=\{v_1,\ldots,v_m\}\in \{0,\ldots,k-1\}^m$. Viewing $\Delta$ as an $m$-dimensional lattice in the Euclidean space, we note that $\diam(\Delta)=(k-1)\sqrt{m}$, while the minimum distance between two vertices in $\Delta$ is one.

Similarly as in the proof of the upper bound of Theorem \ref{th:approximatewaerden}, we consider an iterated blow-up of the cube. For integers $r$ and $t=k\sqrt{m}/\epsilon$, let $A_r$ be the following $r$-iterated blow-up of a cube
\begin{align*}
    A_r=\left\{ \vec{v}_0+t\vec{v}_1+\ldots+t^{r-1}\vec{v}_{r-1}:\: \vec{v}_0,\ldots,\vec{v}_{r-1}\in \Delta,\, t=\frac{k\sqrt{m}}{\epsilon}\right\}.
\end{align*}
Alternatively, we can view $A_r$ as the product $\prod_{i=1}^mB_r^{(i)}$ of $m$ identical copies of 
\begin{align*}
 B_r=\left\{b_0+tb_1+\ldots+t^{r-1}b_{r-1}:\: (b_0,\ldots,b_{r-1})\in \{0,1\ldots,k-1\}^r,\, t=\frac{k\sqrt{m}}{\epsilon} \right\},
\end{align*} an $r$-iterated blow-up of the standard $\AP_k$. Note by the construction that $|A_r|=k^{rm}$. The next proposition shows that fixed $\alpha>0$, for a sufficiently large $r$ any $\alpha$-proportion of $A_r$ will contain a $C_{\epsilon}(m,k)$.

\begin{proposition}\label{prop:simpleupper}
Let $0<\alpha<1$ be a real number and $r$ a positive integer such that $\alpha> \left(\frac{k^m-1}{k^m}\right)^r$. Then every $X\subseteq A_r$ with $|X|\geq \alpha|A_r|$ contains a $C_{\epsilon}(m,k)$.  
\end{proposition}

\begin{proof}
The proof is by induction on $r$. If $r=1$, then $A_1=\Delta$ and $\alpha>\frac{k^m-1}{k^m}$. Let $X\subseteq A_1$ with~$|X|\geq \alpha |A_1|$. Thus
\begin{align*}
    |X|\geq \alpha |A_1| >\frac{k^m-1}{k^m}\cdot k^m=k^m-1,
\end{align*}
which implies that $X=\Delta$. So $X$ contains a cube $C(k,m)$ and in particular an~$\epsilon$-approximate cube.

Now suppose that the proposition is true for $r-1$ and we want to prove it for $r$. First, we partition $A_r$ into $\bigcup_{\vec{u}\in \Delta} A_{r,\vec{u}}$, where
\begin{align*}
    A_{r,\vec{u}}=\left\{\vec{v}_0+t\vec{v}_1+\ldots+t^{r-2}\vec{v}_{r-2}+t^{r-1}\vec{u}:\:\vec{v_0},\ldots,\vec{v}_{r-2}\in \Delta,\, t=\frac{k\sqrt{m}}{\epsilon}\right\}.
\end{align*}
Note that by definition $A_{r,\vec{u}}$ is a translation of $A_{r-1}$ by $t^{r-1}\vec{u}$. In particular, this implies that~$|A_{r,\vec{v}}|=k^{(r-1)m}$. Let $X\subseteq A_r$ with $|X|\geq \alpha |A_r|$ be given. We will distinguish two cases:

\underline{Case 1:} $X\cap A_{r,\vec{u}}\neq \emptyset$ for all $\vec{u}\in \Delta$.

For each $\vec{u}\in \Delta$ choose an arbitrary vector $w(\vec{u}) \in X\cap A_{r,\vec{u}}$. We will observe that $\{w(\vec{u})\}_{\vec{u}\in \Delta}$ forms a $C_{\epsilon}(m,k)$. To testify that, set $\vec{a}=(0,\ldots,0)$ and $d=t^{r-1}$. Write $w(\vec{u})=\sum_{i=0}^{r-2}t^i\vec{w}_i+t^{r-1}\vec{u}$ with $\vec{w}_i \in \Delta$. Thus, a computation shows that
\begin{align*}
    ||w(\vec{u})-(\vec{a}+d\vec{u})||=||w(\vec{u})-t^{r-1}\vec{u}||=\left|\left|\sum_{i=0}^{r-2}t^i\vec{w}_i\right|\right|\leq \sum_{i=0}^{r-2}t^i||\vec{w}_i||
\end{align*}
for $\vec{w}_0,\ldots,\vec{w}_{r-2} \in \Delta$. Since $\diam(\Delta)=(k-1)\sqrt{m}$, it follows that
\begin{align*}
    ||w(\vec{u})-(\vec{a}+d\vec{u})||\leq (k-1)\sqrt{m}\left(\sum_{i=0}^{r-2}t^i\right)\leq kt^{r-2}\sqrt{m}<\epsilon t^{r-1}=\epsilon d,
\end{align*}
by our choice of $t$. Since $\{w(\vec{u})\}_{\vec{u}\in \Delta}\subseteq X$, we conclude that $X$ contains an $C_{\epsilon}(m,k)$.

\underline{Case 2:} There exists $\vec{u_0} \in \Delta$ with $X\cap A_{r,\vec{u}_0}=\emptyset$.

Since $|X|\geq \alpha |A_r|$ and $|\Delta|=k^m$, by an average argument there exists $\vec{u}_1\in \Delta$ such that
\begin{align*}
    |X\cap A_{r,\vec{u}_1}|\geq \frac{\alpha|A_r|}{k^m-1}=\frac{\alpha k^m|A_{r-1}|}{k^m-1}.
\end{align*}
Set $X'=X\cap A_{r,\vec{u}_1}$ and $\alpha'=\frac{\alpha k^m}{k^m-1}$. Note that
\begin{align*}
    \alpha'=\frac{\alpha k^m}{k^m-1}>\left(\frac{k^m-1}{k^m} \right)^r\cdot \frac{k^m}{k^m-1}=\left(\frac{k^m-1}{k^m} \right)^{r-1}.
\end{align*}
Therefore, viewing $A_{r,\vec{u}}$ as a copy of $A_{r-1}$ by the induction assumption we obtain that $X'\subseteq X$ contains an $C_{\epsilon}(m,k)$. 
\end{proof}

Let $r$ be the smallest integer such that $\left(\frac{k^m-1}{k^m}\right)^r< \alpha$ and set $A=A_r$. A computation shows that 
\begin{align*}
    r=\left\lceil \frac{\log(1/\alpha)}{\log \frac{k^m}{k^m-1}} \right\rceil < 2k^m\log(1/\alpha).
\end{align*}
Therefore by Proposition \ref{prop:simpleupper} we have that any set $X\subseteq A$ with $|X|\geq \alpha|A|$ contains an $C_{\epsilon}(m,k)$. Finally, by the construction of $A$ we have that $A\subseteq [N_0]^m$ for
\begin{align*}
    N_0\leq \diam(B_r)+1 =(k-1)(1+t+\ldots+t^{r-1})+1 \leq kt^{r-1}\leq \left(\frac{k\sqrt{m}}{\epsilon}\right)^{2k^m\log(1/\alpha)}.
\end{align*}
\end{proof}

Lemma \ref{lem:upperszemeredi} gives us a set $A\subseteq [N]^m$ such that any $\alpha$-proportion contains a $C_{\epsilon}(m,k)$. However, this is still not good enough, since to obtain an upper bound we need a similar result for $[N]^m$. The next lemma shows by an average argument that the property of $A$ can be extended to $[N]^m$ by losing a factor of a power of two in the proportion $\alpha$.

\begin{lemma}\label{lem:average}
Let $A \subseteq [N]^m$ be a configuration in the grid. For any $X\subseteq [N]^m$ with $|X|\geq \alpha N^m$, there exists a translation $A'$ of $A$ such that $|X\cap A'|\geq \frac{\alpha}{2^m} |A'|$.
\end{lemma}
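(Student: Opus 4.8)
The plan is to use a standard averaging (first-moment) argument over all translations of $A$. First I would fix a bounding box: since $A \subseteq [N]^m$, there is some $M \le N$ such that every coordinate-wise difference of points of $A$ is at most $M$, so every translate $A + \vec{z}$ with $\vec{z} \in \{0,1,\ldots,N-1\}^m$ (say) that keeps $A+\vec z$ inside the larger grid $[2N]^m$ is a legitimate translate to average over. More carefully, let me work inside the grid $G = \{1,\ldots,2N\}^m$ and extend $X$ to $G$ by viewing $X \subseteq [N]^m \subseteq G$; the set of translation vectors $T = \{\vec z \in \ZZ^m : A + \vec z \subseteq G\}$ has size at least $N^m$, since translating each coordinate of $A$ by an amount in $\{0,\ldots,N-1\}$ keeps it inside $\{1,\ldots,2N\}$.

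Next I would compute the average of $|X \cap (A+\vec z)|$ over $\vec z \in T$ by switching the order of summation:
\begin{align*}
\sum_{\vec z \in T} |X \cap (A+\vec z)| = \sum_{\vec z \in T} \sum_{\vec a \in A} \mathds{1}[\vec a + \vec z \in X] = \sum_{\vec a \in A} \#\{\vec z \in T : \vec a + \vec z \in X\} \ge \sum_{\vec a \in A} |X| \cdot \frac{?}{?},
\end{align*}
and here is the one point requiring care: for a fixed $\vec a \in A$, the number of $\vec z \in T$ with $\vec a + \vec z \in X$ is at least $|X|$ minus the number of $\vec z$ that push $\vec a$ outside the admissible region. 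Because each point of $X$ lies in $[N]^m$ and the admissible window for $\vec z$ in each coordinate has length $N$, every point $x \in X$ is hit as $\vec a + \vec z$ for at least one — in fact at least $1$ — vector $\vec z \in T$; to get the clean constant I would instead average over the $N^m$ translation vectors $\vec z \in \{0,\ldots,N-1\}^m$ and note that for each such $\vec z$ and each $x \in X$, the equation $\vec a + \vec z = x$ has a solution $\vec a \in A$ only if $\vec a = x - \vec z \in [N]^m \cap (\text{range of }A)$; counting the other way, $\sum_{\vec z} |X \cap (A+\vec z)| = \sum_{\vec a \in A}|\{\vec z : x := \vec a + \vec z \in X,\ \vec z \in \{0,\ldots,N-1\}^m\}| \ge |A| \cdot \frac{|X|}{2^m}$, since each $x \in X \subseteq [N]^m$ has all coordinates in a window of length $N$ inside the length-$2N$ range, so the fraction of admissible $\vec z$ is at least $(N/2N)^m = 2^{-m}$. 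Dividing by $N^m$ gives an average of at least $\frac{\alpha}{2^m}|A|$, and hence some translate $A'$ achieves $|X \cap A'| \ge \frac{\alpha}{2^m}|A'|$, using $|A'| = |A|$.

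The main obstacle — really the only subtlety — is the bookkeeping at the boundary: a translate $A + \vec z$ may stick out of $[N]^m$, so one cannot average over translations keeping $A$ inside $[N]^m$ and simultaneously capture all of $X$. The fix, as above, is to pass to the doubled grid $[2N]^m$ (or to allow $A'$ to be any translate of $A$, not necessarily one lying in $[N]^m$, which is what the statement permits), so that the admissible translation window in each coordinate has length comparable to $N$ while still covering every point of $X$; the factor $2^m$ is exactly the price of this enlargement, one factor of $2$ per coordinate. Everything else is the routine double-counting displayed above.
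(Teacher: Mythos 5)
Your overall plan — averaging $|X\cap(A+\vec z)|$ over a set of translation vectors and using the first moment to find a good translate — is the right one, and it is the same idea the paper uses. However, your specific choice of a \emph{one-sided} translation window $\vec z\in\{0,\ldots,N-1\}^m$ breaks the key step. You claim that for each fixed $\vec a\in A$ one has $\left|\{\vec z\in\{0,\ldots,N-1\}^m:\ \vec a+\vec z\in X\}\right|\ge |X|/2^m$, but this is false: the box $\vec a+\{0,\ldots,N-1\}^m$ is $\prod_i[a_i,a_i+N-1]$, whose intersection with $[N]^m$ (where $X$ lives) is only $\prod_i[a_i,N]$, and this can be a single point if $\vec a$ is near the corner $(N,\ldots,N)$. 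Concretely, take $A=\{(N,\ldots,N)\}$ and $X=[N]^m\setminus\{(N,\ldots,N)\}$, so $\alpha$ is close to $1$; then $A+\vec z$ meets $[N]^m$ only when $\vec z=\vec 0$, and $\sum_{\vec z\in\{0,\ldots,N-1\}^m}|X\cap(A+\vec z)|=0$, while your claimed lower bound $|A|\,|X|/2^m$ is positive. The conclusion of the lemma is of course still true for this example (translate $A$ \emph{downward}), but your averaging set simply never tries those translates. The justification you give (``each $x\in X$ has all coordinates in a window of length $N$ inside the length-$2N$ range'') is an argument about each $x$, not about each $\vec a$, and the two quantifiers do not interchange here.

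The repair is to average over a \emph{two-sided} window, which is what the paper does: draw $\vec u$ uniformly from $[-N+1,N]^m$ (size $(2N)^m$) and set $A'=A+\vec u$. Then for every fixed $\vec x\in X\subseteq[N]^m$, the set $\vec x-[-N+1,N]^m=\prod_i[x_i-N,x_i+N-1]$ contains $[N]^m$ entirely, so there are \emph{exactly} $|A|$ vectors $\vec u$ with $\vec x-\vec u\in A$, i.e. $\PP(\vec x\in A')=|A|/(2N)^m$. Summing over $\vec x\in X$ gives
\begin{align*}
\EE\bigl(|X\cap A'|\bigr)=\frac{|X|\,|A|}{(2N)^m}\ge\frac{\alpha}{2^m}|A|,
\end{align*}
and the first-moment method finishes. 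The factor $2^m$ arises precisely because the translation window has volume $(2N)^m$ while $|X|$ is only compared to $N^m$ — not, as in your write-up, because each point of $X$ is reachable from each $\vec a$ a $2^{-m}$ fraction of the time.
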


\begin{proof}
Consider a random translation $A'=A+\vec{u}$, where $\vec{u}=(u_1,\ldots,u_m)$ is an integer vector chosen uniformly inside $[-N+1,N]^m$. For every vector $\vec{x} \in X$, there exists exactly $|A|$ elements~$\vec{v} \in [-N+1,N]^m$ such that $\vec{x}-\vec{v}\in A$. This means that $\PP(\vec{x} \in A')=\PP(\vec{x}-\vec{u}\in A)=\frac{|A|}{(2N)^m}$. Therefore
\begin{align*}
    \EE(|X\cap A'|)=\sum_{\vec{x} \in X}\PP(\vec{x} \in A')=\frac{|X||A|}{(2N)^m}\geq \frac{\alpha}{2^m}|A|=\frac{\alpha}{2^m}|A'|
\end{align*}
Consequently, by the first moment method, there is $\vec{u}$ and $A'$ satisfying our conclusion.
\end{proof}

We finish the section putting everything together.

\begin{proposition}
Let $N$, $m$ and $k$ be integers and $\epsilon>0$. Then there exists a positive constant $c_2$ depending only on $k$ and $m$ such that the following holds. If $S\subseteq [N]^m$ is such that
\begin{align*}
    |S|> N^{m-c_2(\log(1/\epsilon))^{-1}},
\end{align*}
then $S$ contains an $C_{\epsilon}(m,k)$.
\end{proposition}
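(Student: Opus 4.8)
The plan is to obtain the statement by combining the blow-up Lemma~\ref{lem:upperszemeredi} with the averaging Lemma~\ref{lem:average}, the only subtlety being to guarantee that the auxiliary configuration furnished by Lemma~\ref{lem:upperszemeredi} genuinely sits inside $[N]^m$. First I would set $\alpha=N^{-c_2(\log(1/\epsilon))^{-1}}$, so that the hypothesis becomes $|S|>\alpha N^m$, and put $\alpha'=\alpha/2^m$. Applying Lemma~\ref{lem:upperszemeredi} with the proportion $\alpha'$ produces a set $A\subseteq[N_0]^m$ with $N_0\le(k\sqrt m/\epsilon)^{2k^m\log(1/\alpha')}$ such that every $X\subseteq A$ with $|X|\ge\alpha'|A|$ contains a $C_\epsilon(m,k)$.

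The main step is to check $N_0\le N$. Passing to logarithms, $\log(1/\alpha')=m\log 2+c_2(\log(1/\epsilon))^{-1}\log N$, and since $\epsilon<1/125$ gives $\log(1/\epsilon)\ge\log 125$, one has the crude bound $\log(k\sqrt m/\epsilon)=\log(k\sqrt m)+\log(1/\epsilon)\le C_{k,m}\log(1/\epsilon)$ with $C_{k,m}=1+\log(k\sqrt m)/\log 125$. Combining these,
\begin{align*}
\log N_0\le 2k^m C_{k,m}\bigl(m\log 2\cdot\log(1/\epsilon)+c_2\log N\bigr).
\end{align*}
Now I would first fix $c_2$ (depending only on $k$ and $m$) small enough that $2k^m C_{k,m}c_2\le\tfrac{1}{2}$, and then note that for $N$ exceeding a threshold $N_0(k,m,\epsilon)$ — which is all Theorem~\ref{th:approximateszemeredi} asks for — the remaining term satisfies $2k^m C_{k,m}m\log 2\cdot\log(1/\epsilon)\le\tfrac{1}{2}\log N$; hence $\log N_0\le\log N$ and $A\subseteq[N_0]^m\subseteq[N]^m$.

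With $A\subseteq[N]^m$ I would apply Lemma~\ref{lem:average} to $X=S$: since $|S|>\alpha N^m$, there is a translate $A'$ of $A$ with $|S\cap A'|\ge(\alpha/2^m)|A'|=\alpha'|A'|$. The property of $A$ from Lemma~\ref{lem:upperszemeredi} is translation invariant — shifting $\vec a$ and every $x_{\vec v}$ by the same vector preserves the inequality $||x_{\vec v}-(\vec a+d\vec v)||<\epsilon d$ — so $A'$ enjoys it too, and therefore $S\cap A'$, being an $\alpha'$-proportion of $A'$, contains a $C_\epsilon(m,k)$, which then lies inside $S$. The one place calling for care is the middle step: one must absorb the factor $2^m$ lost in passing from $\alpha$ to $\alpha'$ and the factor $\log(k\sqrt m/\epsilon)$ coming from the blow-up, and this is exactly what the bound $\log(k\sqrt m/\epsilon)\le C_{k,m}\log(1/\epsilon)$ (valid only because $\epsilon$ is bounded away from $1$) together with a sufficiently small $c_2$ and large $N$ achieves.
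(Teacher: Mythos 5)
Your argument is correct and follows essentially the same route as the paper: apply Lemma~\ref{lem:upperszemeredi} to manufacture a blow-up configuration $A$, verify $A\subseteq[N]^m$ by a logarithmic calculation that uses $\epsilon<1/125$ to compare $\log(k\sqrt m/\epsilon)$ with $\log(1/\epsilon)$, and then use the averaging Lemma~\ref{lem:average} to find a dense translate $A'\cap S$. The only cosmetic difference is bookkeeping of the $2^m$ factor (the paper builds it into $\alpha_0$ so that $\log(2^m/\alpha_0)$ has no additive $m\log2$ term, whereas you carry it along and absorb it by taking $N$ large), and you are more explicit than the paper about the implicit requirement that $N$ exceed a threshold $N_0(k,m,\epsilon)$, which the paper also needs but buries in the final ``for appropriate $c_2$'' step.
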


\begin{proof}
Set $\alpha_0=2^mN^{-c'(\log(1/\epsilon))^{-1}}$ where $c'=(4k^m\log(k\sqrt{m}))^{-1}$. Let $N_0=N_0(\alpha_0/2^m,\epsilon,m,k)$ be the integer obtained by Lemma \ref{lem:upperszemeredi} and $A\subseteq [N_0]$ be the set such that any $X\subseteq A$ with~$|X|\geq \frac{\alpha_0}{2^m}|A|$ contains an $C_{\epsilon}(m,k)$. Note that
\begin{align*}
    N_0&\leq \left(\frac{k\sqrt{m}}{\epsilon}\right)^{2k^m\log(2^m/\alpha_0)} = \exp\left(\frac{2c'k^m\log N \log(k\sqrt{m}/\epsilon)}{\log(1/\epsilon)} \right)\\
    &\leq \exp\left(4c'k^m\log N \log(k\sqrt{m}) \right)=N,
\end{align*}
which implies that $A\subseteq [N]$.

Let $S \subseteq [N]$ with $|S|\geq \alpha_0 N^m$. Then by Lemma \ref{lem:average}, there exists a translation $A'$ of $A$ such that $|S\cap A'|\geq \frac{\alpha_0}{2^m}|A'|$. Hence, by Lemma \ref{lem:upperszemeredi}, the set $S$ contains a $C_{\epsilon}(m,k)$. The result now follows since
\begin{align*}
    |S|\geq \alpha_0 N^m=2^mN^{m-c'(\log(1/\epsilon))^{-1}}>N^{m-c_2(\log(1/\epsilon))^{-1}}
\end{align*}
for appropriate $c_2$.
\end{proof}

\bibliography{literature}
\end{document}